\documentclass[a4paper]{amsart}
\usepackage[top=2.5cm, left=1.8cm, right=1.8cm, foot=1.3cm, bottom=3.2cm]{geometry}
\usepackage{amssymb,euscript}
\usepackage{hyperref}

\title{Precompactness for homogeneous Einstein metrics on torus bundles}
\author{Anusha M. Krishnan}
\address[\sc A.\ M. \ Krishnan]{Department of Mathematics, Indian Institute of Technology Bombay \\ Powai, Mumbai - 400076, India}
\email{anushamk@math.iitb.ac.in}

\author{Vincent P. N. Wolff}
\address[\sc V.P.N. Wolff]{Mathematisches Institut, Universit\"at M\"unster, M\"unster, Germany}
\email{vincent.wolff@uni-muenster.de}

\author{Masoumeh Zarei}
\address[\sc M. Zarei]{Fachbereich Mathematik, Universit\"at Hamburg, Hamburg, Germany}
\email{masoumeh.zarei@uni-hamburg.de}

\subjclass[2020]{53C25, 53C30, 57S15}
\keywords{Einstein metrics, compact homogeneous spaces, torus bundles}

\newcommand{\R}{\mathbb{R}}

\newcommand{\Z}{\mathbb{Z}}
\newcommand{\Q}{\mathbb{Q}}
\newcommand{\J}{\mathcal{J}}
\newcommand{\E}{\mathcal{E}}
\newcommand{\K}{\mathcal{K}}
\newcommand{\F}{\EuScript{F}}
\newcommand{\g}{\mathfrak{g}}
\renewcommand{\a}{\mathfrak{a}}
\renewcommand{\t}{\mathfrak{t}}
\newcommand{\h}{\mathfrak{h}}
\newcommand{\m}{\mathfrak{m}}
\newcommand{\n}{\mathfrak{n}}
\newcommand{\z}{\mathfrak{z}}
\newcommand{\G}{\mathsf{G}}
\newcommand{\A}{\mathsf{A}}
\newcommand{\T}{\mathsf{T}}
\renewcommand{\H}{\mathsf{H}}
\renewcommand{\S}{\mathsf{S}}
\newcommand{\SU}{\mathsf{SU}}
\renewcommand{\O}{\mathsf{O}}
\newcommand{\N}{\mathsf{N}}
\DeclareMathOperator{\Rm}{Rm}
\DeclareMathOperator{\Ric}{Ric}
\DeclareMathOperator{\Ad}{Ad}

\DeclareMathOperator{\End}{End}
\DeclareMathOperator{\Id}{Id}
\DeclareMathOperator{\rank}{rank}
\DeclareMathOperator{\vol}{vol}
\DeclareMathOperator{\Gr}{Gr}
\DeclareMathOperator{\inj}{inj}
\newcommand{\abs}[1]{\ensuremath{\left|#1\right|}}
\let\subset\subseteq

\theoremstyle{plain}
\newtheorem{theorem}{Theorem}[section]
\newtheorem{proposition}[theorem]{Proposition}
\newtheorem{lemma}[theorem]{Lemma}
\newtheorem{corollary}[theorem]{Corollary}
\newtheorem{inttheorem}{\sc Theorem}

\newtheorem{intcorollary}[inttheorem]{\sc Corollary}
\theoremstyle{definition}
\newtheorem{definition}[theorem]{Definition}
\theoremstyle{remark}
\newtheorem{remark}[theorem]{Remark}
\numberwithin{equation}{section}

\begin{document}
\begin{abstract}
  We prove a precompactness result for homogeneous Einstein metrics on the set of homogeneous torus bundles over a common base. This set can contain infinite families of compact simply connected homogeneous torus bundles over a common simply connected base with toral fibres of fixed dimension.
\end{abstract}
\maketitle

\section{Introduction}
A Riemannian manifold $(M,g)$ is called Einstein if it satisfies $\Ric(g) = \lambda g$ for some $\lambda\in\R$. Major existence results for Einstein metrics have been obtained via constructions involving holonomy, bundle structures, or symmetry (see \cite{Bes08, Wan99, Joy00, And10, Spa11, Wan12, Jab23} for surveys). In particular, the last approach yields a rich collection of examples on homogeneous spaces (see, e.g., \cite{Jen73, Wan82, WZ86}).\smallskip

The recent proof of the Alekseevskii conjecture~\cite{BL23} can be considered a classification of (non-compact) homogeneous Einstein manifolds with negative Einstein constant, see also \cite{Lau10,LL14}. Moreover, Ricci flat homogeneous Einstein manifolds are flat by~\cite{AK75}. Thus to complete the classification of homogeneous Einstein manifolds, it remains to consider the case of positive Einstein constant, which is expected to be the most difficult case. Recall that by the Bonnet--Myers theorem such homogeneous spaces are necessarily compact.\smallskip

On compact homogeneous spaces, the Einstein equation reduces to a system of algebraic equations. Although this makes the problem more tractable, the classification of compact simply connected homogeneous Einstein manifolds is known only up to dimension 12 (see \cite{BK06}). Beyond this, there are several existence results for certain infinite families (see, e.g., \cite{ACS10,CN19,LW25}). Moreover, a systematic approach to the study of homogeneous Einstein metrics has been achieved by using their variational characterization as critical points of the Hilbert functional (see \cite{WZ86,BWZ04,Boh04}). This then leads to stability questions (see, e.g., \cite{Lau22, LW22, SS25}). Another line of investigation concerns the moduli space of invariant Einstein metrics on compact homogeneous spaces, leading to the Finiteness Conjecture of \cite{BWZ04} (see \cite{BF25} for recent progress). We refer the reader to \cite{BK23} for a list of open problems in the area, in particular \cite[Problem~1]{BK23}, which motivated the investigation of the present article.\smallskip

One of the main challenges in the classification of homogeneous Einstein manifolds comes from the presence of infinite families of torus bundles in a fixed dimension arising from different embeddings of the isotropy group into a fixed group. This phenomenon is illustrated by the Aloff--Wallach spaces $W_{k,l} := \SU(3)/ \S^1_{k,l}$, where $\S^1_{k,l}$ is a circle embedded in the maximal torus $\T^2$ of $\SU(3)$ with rational slope $(k,l)$. Each $W_{k,l}$ is a simply connected circle bundle over $\SU(3)/\T^2$. Moreover, by \cite{Wan82, Nik04}, they all admit $\SU(3)$-invariant Einstein metrics. A key question is whether the existence of an Einstein metric in such a family depends on a member or only on the family itself. If the latter holds, the existence question could be treated in a unified way, reducing the classification to a finite question in each dimension (see \cite[Problem 1]{BK23}). This is indeed possible if the simplicial complex of the base is non-contractible (see~\cite{Boh04}) which is crucial for the classification up to dimension~12.\smallskip

Our general setup is as follows. Let $\H\subsetneq\G$ be compact connected semisimple Lie groups with $\G/\H$ simply connected and almost effective. Let $\N_\G(\H)$ be the normalizer of $\H$ in $\G$ and let $\A$ be a maximal torus in $\N_\G(\H)/\H$, with $\dim\A\ge2$. For fixed \mbox{$0< \ell < \dim\A$}, we consider homogeneous spaces $\G/\H\tilde\A$, where $\tilde\A$ is a compactly embedded subtorus with \mbox{$\dim\tilde\A = \dim\A-\ell$}. Note that $\G/\H\tilde\A$ is (the total space of) an $\ell$-torus bundle over the base $\G/\H\A$, with fibre $\A/\tilde\A$. We say that the homogeneous torus bundle $\G/\H\tilde\A$ is of \emph{submersion type} if any $\G$-invariant metric is additionally right $\A$-invariant, a condition which is generically true (see Theorem~\ref{intthm:generic}). By extending this family of smooth torus bundles to include \emph{locally homogeneous spaces}, we endow the family of (locally) homogeneous torus bundles $\G/\H\tilde\A$ with a natural Grassmannian topology (see Section~\ref{subsec:moduli_space}).\smallskip

\begin{inttheorem}\label{intthm:precompactness}
  Let $(M^n_i=\G/\H\tilde\A_i,g_i)_{i\in\mathbb{N}}$ be a sequence of $\G$-homogeneous Einstein torus bundles of submersion type, with fixed Einstein constant $1$. Then, up to passing to a subsequence, $(\G/\H\tilde\A_i,g_i)$ converges to a (locally) $\G$-homogeneous Einstein torus bundle $(M^n_\infty=\G/\H\tilde\A_\infty,g_\infty)$ in the Grassmannian topology.
\end{inttheorem}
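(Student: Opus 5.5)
Since each $\G/\H\tilde\A_i$ is of submersion type, every $\G$-invariant metric $g_i$ is also right $\A$-invariant. I will exploit this to describe all the data on a fixed vector space. Choose a bi-invariant background metric $Q$ on $\g$ and write $\g=\h\oplus\a\oplus\m$, with $\m$ the $Q$-orthogonal complement of $\h\oplus\a$; here $\m$ is the tangent space of the base $\G/\H\A$. The subtorus $\tilde\A_i$ is encoded by its Lie algebra $\tilde\a_i\in\Gr_{\dim\A-\ell}(\a)$. The tangent space of $M_i$ is $\a_i^\perp\oplus\m$, where $\a_i^\perp$ is the $Q$-complement of $\tilde\a_i$ in $\a$. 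Right $\A$-invariance means $g_i$ is $\Ad(\A)$-invariant. So $g_i$ is an inner product that is block diagonal with respect to the $\Ad(\A)$-isotypic splitting of $\m$, together with an arbitrary inner product on the fibre $\a_i^\perp$ and possibly mixed terms only between trivial $\A$-modules (which lie in $\a$ alone when $\A$ is maximal in $\N_\G(\H)/\H$).

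The Grassmannian topology then amounts to convergence of the pair $(\tilde\a_i,g_i)$. This works once we identify $\a_i^\perp\cong\R^\ell$ via frames and allow irrational limits $\tilde\a_\infty$, which give the locally homogeneous spaces. Compactness of $\Gr_{\dim\A-\ell}(\a)$ lets us pass to a subsequence with $\tilde\a_i\to\tilde\a_\infty$. It remains to show that the metrics $g_i$ stay in a compact subset of positive definite inner products on $\a_i^\perp\oplus\m$, uniformly in $i$. Given that, a further subsequence converges, and the Einstein equation $\Ric(g_i)=g_i$ passes to the limit. This is because the Ricci tensor of a (locally) homogeneous metric is an algebraic (rational) function of the structure constants of $\g$ relative to $\tilde\a_i$ and of $g_i$, and these vary continuously. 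So $g_\infty$ is Einstein with constant $1$ on $\G/\H\tilde\A_\infty$, and $\A$-invariance and the submersion structure are preserved in the limit.

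The main obstacle is the uniform two-sided bound on the eigenvalues of $g_i$ relative to $Q$.

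\emph{Upper bound.} Einstein constant $1$ gives scalar curvature $n$. By the standard formula, the scalar curvature is at most $\tfrac12\sum_j b_j/x_j$, where the $x_j$ are the eigenvalues of $g_i$ on the $\A$-isotypic pieces and the $b_j$ are Killing-form constants. This, together with Bonnet--Myers bounds on the diameter, should bound the volume and the eigenvalues from above. I would use $\vol\le \vol(S^n)$ directly and express the volume as $\prod x_j^{d_j/2}$ times the $Q$-volume of $\G/\H\tilde\A_i$. That $Q$-volume can blow up as $\tilde\a_i$ becomes irrational, so it is safer to work with the local data.

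\emph{Lower bound.} This is the harder half. I would follow the Böhm--Wang--Ziller strategy. Suppose some eigenvalue tends to $0$ after normalising the largest one to be bounded. The collapsing directions then span a subalgebra, and the Einstein condition forces that subalgebra to be compatible with $\h\oplus\tilde\a_i$ in a way that contradicts the assumption that $\H\subsetneq\G$ is semisimple with $\G/\H$ almost effective.

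The fibre directions need separate care. Their Ricci curvature is $\tfrac14|A|^2$-type positive terms coming from the O'Neill tensor of the Riemannian submersion to $\G/\H\A$; these are positive since the fibre is flat. If the fibre metric degenerated, these terms would scale like $x/x_{\m}^2$ and could not equal $x$ unless the base eigenvalues also degenerate. This reduces to the base case. The base $\G/\H\A$ is fixed, so its structure constants are uniform, which gives uniformity across $i$.

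I expect the non-collapsing of the fibre, i.e. the lower bound on the $\a_i^\perp$-eigenvalues as $\tilde\a_i$ varies, to be the delicate step. There I would compare the fibre Ricci term $\tfrac14\sum|[X,Y]_{\a_i^\perp}|^2_{g}$ with $g(V,V)$ and use that $[\m,\m]$ projects onto all of $\a$ (which follows from semisimplicity and maximality of $\A$) to obtain a uniform inequality.
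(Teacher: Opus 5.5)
Your framework is the paper's: compactness of $\Gr_{a-\ell}(\a)$ and of the decompositions, continuity of the Ricci tensor in the bracket data, and a fibre estimate via the O'Neill tensor. But the step everything rests on is missing. That step is a \emph{uniform} two-sided bound on the horizontal (base) eigenvalues, and you do not establish it.

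\textbf{Upper bound.} Your upper bound fails. The inequality $\mathrm{scal}=n\le\frac12\sum_j b_j/x_j$ only bounds $\min_j x_j$ from above. Bishop--Gromov on the total space gives nothing, because $\vol_Q(\G/\H\tilde\A_i)$ is not bounded below. It tends to $0$, e.g.\ along $W_{i,i+1}$, where $\vol_{g_i}\to 0$ although the eigenvalues stay bounded. ``Working with local data'' does not replace this.

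The missing idea is to descend to the fixed base $\G/\H\A$. For $g$-unit horizontal $X$, O'Neill's formula with totally geodesic flat fibres gives $\Ric_{\check g}(X,X)=1+2(A_X,A_X)\ge 1$. Since $|A|^2=\sum_j(AU_j,AU_j)=\sum_j\Ric_g(U_j,U_j)=\ell$, the trace of $\Ric_{\check g}$ is $\dim\n+2\ell$. Hence $\Ric_{\check g}\ge 1$ and $\|\Ric_{\check g}\|_{\check g}\le\dim\n+2\ell$, uniformly in $i$. Bishop--Gromov on $(\G/\H\A,\check g_i)$, whose $Q$-volume is fixed, then converts lower bounds on the horizontal eigenvalues into upper bounds.

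\textbf{Lower bound.} The B\"ohm--Wang--Ziller strategy is not viable as you state it, for three reasons. Their compactness concerns one fixed space and gives no explicit constants, so it is not uniform in $i$. On the base, the metrics $\check g_i$ are not Einstein. And with Einstein constant $1$ there is no freedom to ``normalise the largest eigenvalue''.

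Nor is semisimplicity of $\H$ plus almost effectiveness what prevents collapse. Left-invariant Berger metrics on $\SU(2)$ collapse with bounded curvature. The total spaces themselves carry toral directions $\A/\tilde\A_i$ inside $\N_\G(\H\tilde\A_i)/\H\tilde\A_i$, so no curvature-type non-collapsing can hold upstairs.

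The paper instead proceeds in two steps. First, the Gap theorem \cite{BLS19}, $\|\Rm\|\le C(n)\|\Ric\|$ for homogeneous metrics, gives uniform curvature bounds for $\check g_i$. Second, the non-collapsing theorem \cite{Ped19,KPS26} says that on $\G/\mathsf{K}$ with $\rank\mathsf{K}=\rank\N_\G(\mathsf{K})$ a curvature bound forces the eigenvalues to be bounded below. This applies to $\mathsf{K}=\H\A$ exactly because $\A$ is a maximal torus of $\N_\G(\H)/\H$.

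\textbf{Fibre.} Once the base is controlled, your fibre argument finishes the proof, but you need to correct the scaling. For a $Q$-unit $U$ spanning a fibre summand $\n_i$ with eigenvalue $x_i$, $\Ric(U,U)=\frac{x_i^2}{4}\sum_j[ijj]/x_j^2$ must equal $x_i$, i.e.\ $x_i^{-1}=\frac14\sum_j[ijj]/x_j^2$. With your stated scaling ``$x/x_\m^2=x$'' the fibre eigenvalue cancels and nothing follows. The uniform constants come from $\sum_j[ijj]=-B(U,U)$, with $B$ the Killing form. This is where your ``$[\m,\m]$ projects onto $\a$'' enters, and it needs no case distinction on $\m(\h,\a)$.
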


We refer the reader to Remark \ref{rem:convergence_Grassmannian_topology} for an elaboration on the convergence of $(\G/\H\tilde\A_i,g_i)$ in the Grassmannian topology. The key point in proving Theorem~\ref{intthm:precompactness} is to exclude the possibility that the Einstein metrics $g_i$ degenerate, i.e., that they collapse or escape to infinity. In order to do this, we obtain uniform and explicit estimates for Einstein metrics on homogeneous torus bundles of submersion type, which to the best of our knowledge are novel. Recall that the compactness result by~\cite{BWZ04} for homogeneous Einstein metrics for a fixed homogeneous space does not provide explicit estimates. Therefore, it cannot be applied to sequences of homogeneous spaces.\smallskip

Note that the limit space $\G/\H\tilde\A_\infty$ is globally homogeneous if and only if $\tilde\A_\infty$ is compactly embedded in $\A$. For example, for a sequence of Aloff--Wallach spaces $W_{k_i,l_i}$, this is true if ${k_i}/{l_i}$ converges to a rational number.\smallskip

We remark that Theorem~\ref{intthm:precompactness} follows from a more general result in which we relax the assumption that the manifolds $\G/\H\tilde\A_i$ are of submersion type by requiring only that the metrics $g_i$ are additionally right $\A$-invariant (see Theorem~\ref{T:ThmA-more-general}). Nevertheless, it is expected that the class of homogeneous spaces of submersion type is itself large. For example, the Aloff--Wallach spaces $W_{k,l}$ are all of submersion type except for those that are isomorphic to $W_{1,0}$ or $W_{1,1}$. Our next result confirms this expectation for all such families of homogeneous torus bundles.

\begin{inttheorem}\label{intthm:generic}
  With respect to the Grassmannian topology, the subclass of (locally) homogeneous spaces of submersion type contains a dense open subset of full measure.
\end{inttheorem}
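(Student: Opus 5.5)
The plan is to recast "submersion type" as the nonvanishing of finitely many polynomial conditions on the Grassmannian $\Gr_{\dim\A-\ell}(\a)$. Here $\a$ is the Lie algebra of $\A$, and a (locally) homogeneous torus bundle is given by a subspace $\tilde\a\subset\a$ of dimension $\dim\A-\ell$.

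Fix a bi-invariant background metric $Q$ on $\g$. Write $\g=\h\oplus\a\oplus\m$, where $\m$ is the $Q$-orthogonal complement of $\h\oplus\a$. For $\tilde\a\in\Gr$ the isotropy representation of $\G/\H\tilde\A$ is $\mathfrak p=\tilde\a^\perp\oplus\m$, with $\tilde\a^\perp\subset\a$ a trivial module. A $\G$-invariant metric is an $\Ad(\H\tilde\A)$-invariant inner product on $\mathfrak p$. It is right $\A$-invariant precisely when it is also $\Ad(\A)$-invariant, i.e. when it commutes with $\operatorname{ad}(\a)$ and not merely with $\operatorname{ad}(\tilde\a)$.

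Decompose $\m$ into $\H\A$-isotypic pieces. The complexification $\m^{\mathbb C}$ splits under $\H\times\A$ into components $V_{\pi,\beta}$, where $\pi$ is an irreducible $\H$-representation and $\beta\in\a^*$ is a weight of $\A$. Only finitely many pairs $(\pi,\beta)$ occur.

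$\Ad(\H\tilde\A)$-invariant inner products differ from $\Ad(\H\A)$-invariant ones only through intertwiners between modules that become equivalent after restricting to $\tilde\a$. Concretely, this concerns pairs $(\pi,\beta)\ne(\pi',\beta')$ with $\pi\cong\pi'$ or $\pi\cong\bar\pi'$, and with $\beta|_{\tilde\a}=\pm\beta'|_{\tilde\a}$; this includes $\beta|_{\tilde\a}=0$, which lets $\m$ mix with the trivial summand $\tilde\a^\perp$. So $\tilde\a$ fails to be of submersion type exactly when it is contained in one of the finitely many subspaces $\ker(\beta\mp\beta')$ or $\ker\beta$ for such pairs, where $\beta\mp\beta'\ne0$ and $\beta\ne0$ respectively.

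Some care is needed here. When $\pi=\pi'$ and $\beta=\beta'$ there is no new mixing. When $\beta-\beta'=0$ but $\pi\ne\pi'$ there is also none, since no equivalence arises. The case $\beta=0$ only involves $\h$-modules inside $\m$ of weight zero, where mixing is already $\A$-invariant. The first point to verify is therefore that every bad condition involves a genuinely nonzero linear form $\gamma\in\a^*$.

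This verification is the main obstacle, and I would handle it by the weight bookkeeping above. One must check that the trivial $\H$-summands of $\m$ with zero $\A$-weight do not spoil the argument. They lie in the centralizer of $\h\oplus\a$, and maximality of $\A$ in $\N_\G(\H)/\H$ should rule them out beyond $\a$ itself. One must also handle the conjugate case $\pi\cong\bar\pi'$ for the real structure.

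Granting this, the non-submersion set is
\[
B=\bigcup_{\gamma}\{\tilde\a\in\Gr_{\dim\A-\ell}(\a):\tilde\a\subset\ker\gamma\},
\]
a finite union over nonzero $\gamma$. Each piece is a sub-Grassmannian $\Gr_{\dim\A-\ell}(\ker\gamma)$. It is a closed submanifold of dimension $(\dim\A-\ell)(\ell-1)$, strictly less than $(\dim\A-\ell)\ell$ because $\dim\A-\ell>0$.

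It follows that $B$ is closed and has measure zero. Its complement $\Gr\setminus B$ is therefore open, dense (a proper submanifold has empty interior) and of full measure. It consists of submersion type spaces, which proves the theorem.
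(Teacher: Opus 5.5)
Your proposal is correct and is essentially the paper's argument. Lemma~\ref{lem:submersion_type} shows that every $\tilde\a$ avoiding the kernels of the finitely many nonzero forms $\mu\in\J_1$ and $\mu\pm\nu$, $\{\mu,\nu\}\in\J_2$, is of submersion type; as you anticipate, this uses that maximality of $\A$ excludes $\Ad(\H\A)$-trivial summands of $\n$. The exceptional set is then a finite union of sub-Grassmannians $\Gr_{a-\ell}(\ker\gamma)$ of positive codimension.

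There are two differences. First, you do the representation theory on complexified isotypic components rather than via Lemma~\ref{lem:weights_of_irreps} and Corollary~\ref{cor:Torus_Action_Rest}. Second, you claim ``exactly when'', which is more than you need or prove: a new complex intertwiner need not produce a new real symmetric invariant form, e.g.\ for a quaternionic-type $\pi$ occurring with weights $\pm\beta$. Only the inclusion of the non-submersion set in $B$ is used, so you can drop that claim.
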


The following corollary of Theorems~\ref{intthm:precompactness} and~\ref{intthm:generic} highlights the significance of homogeneous spaces of submersion type within a given family.

\begin{intcorollary}
  For a family of homogeneous spaces of the form $\G/\H\tilde\A$, the existence of Einstein metrics on all spaces of submersion type implies the existence of Einstein metrics on all members of the family.
\end{intcorollary}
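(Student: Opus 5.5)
The plan is to combine Theorems~\ref{intthm:precompactness} and~\ref{intthm:generic} by a density argument. Fix a family $\G/\H\tilde\A$, and let $\G/\H\tilde\A_\infty$ be an arbitrary member, where $\tilde\A_\infty$ is a compactly embedded subtorus of $\A$ of dimension $\dim\A-\ell$.

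First, I use Theorem~\ref{intthm:generic}. The subclass of submersion type contains a dense open subset of the Grassmannian of $(\dim\A-\ell)$-dimensional subspaces of the Lie algebra $\a$. Within it, the subspaces corresponding to compactly embedded subtori, namely the rational ones, are still dense, because rational points are dense in the Grassmannian and a dense open set meets every nonempty open set. So I can choose a sequence of compactly embedded subtori $\tilde\A_i$ such that each $\G/\H\tilde\A_i$ is a genuine homogeneous space of submersion type and $\mathrm{Lie}(\tilde\A_i)\to\mathrm{Lie}(\tilde\A_\infty)$. By hypothesis, each $\G/\H\tilde\A_i$ carries a $\G$-invariant Einstein metric $g_i$. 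Rescaling so that the Einstein constant is $1$ is legitimate: the constant is positive, since it cannot be negative or zero on a compact homogeneous space that is not a torus, and $\G$ is semisimple.

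Next, I apply Theorem~\ref{intthm:precompactness} to $(\G/\H\tilde\A_i,g_i)$. After passing to a subsequence, it converges in the Grassmannian topology to a (locally) $\G$-homogeneous Einstein torus bundle $(\G/\H\tilde\A',g')$. Since the Grassmannian topology records the subspace $\mathrm{Lie}(\tilde\A_i)$, the limit subspace must be $\mathrm{Lie}(\tilde\A_\infty)$. As $\tilde\A_\infty$ is compactly embedded, the limit is globally homogeneous and equals $\G/\H\tilde\A_\infty$. Hence $g'$ is a $\G$-invariant Einstein metric on it, with Einstein constant $1$, in particular nondegenerate.

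The step I expect to need care is this last identification. I must check that convergence in the Grassmannian topology, as explained in Remark~\ref{rem:convergence_Grassmannian_topology}, means convergence of the subalgebras together with convergence of the metrics on the varying complements $\m_i$, realized as a limit inner product on $\m_\infty$ that is positive definite. Nondegeneracy is exactly what Theorem~\ref{intthm:precompactness} guarantees by excluding collapse and escape to infinity. The Einstein equation then passes to the limit, because the Ricci tensor of an invariant metric depends continuously on the structure constants and on the inner product. The result is an Einstein metric on every member of the family, including those not of submersion type, such as $W_{1,0}$ and $W_{1,1}$.
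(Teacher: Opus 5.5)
Your proposal is correct and follows the argument the paper intends when it calls this a corollary of Theorems~\ref{intthm:precompactness} and~\ref{intthm:generic}: approximate an arbitrary member by submersion-type members, rescale their invariant Einstein metrics to Einstein constant $1$, and use precompactness together with uniqueness of limits in $\Gr_{a-\ell}(\a)$ to obtain an invariant Einstein metric on the given member. You also supply two details the paper leaves implicit: that the dense open set from Theorem~\ref{intthm:generic} still contains a dense set of closed (rational) subtori, via Proposition~\ref{prop:closed_groups_dense}, and that the rescaling to Einstein constant $1$ is legitimate because the Einstein constant is positive.
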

In the context of a unified approach to the existence question on a given family of homogeneous spaces $\G/\H\tilde\A$, we point out that the existence of Einstein metrics on manifolds which are not of submersion type does not in general imply the existence on those of submersion type, see e.g., \cite[Example 5.13 and the paragraph following its proof]{Boh05}. Indeed, in general the space of $\G$-invariant metrics on a non-submersion type manifold has greater dimension than the space of $\G$-invariant metrics on a manifold of submersion type in the same family. We also remark that in the latter case the dimension of the space of $\G$-invariant metrics is independent of the choice of a particular submersion type manifold in the family.\smallskip

While studying existence questions for Einstein metrics on the total space of principal bundles, Riemannian submersions and O'Neill's formulae naturally play an essential role. This can be seen, for instance, in the expression of the Ricci tensor of the total space in terms of the Ricci tensor of the fibre and base, along with the O'Neill tensor. This expression becomes more tractable when the fibres are tori (see Section~\ref{subsec:invariant_metrics}, cf.~\cite{WZ90}).\smallskip

Before proceeding, we briefly comment on the notion of convergence considered here. First, convergence in the Grassmannian topology places no restrictions on the topology of the underlying manifolds. This is illustrated by the sequence of Aloff--Wallach spaces $W_{i,i+1}$, which converges to $W_{1,1}$ in the Grassmannian topology, even though the manifolds $W_{i,i+1}$ and $W_{1,1}$ are not homeomorphic (see \cite[Lem.~3.3]{AW}).\smallskip

Next, convergence of the invariant Einstein metrics implies, in particular, that the curvature tensors and their covariant derivatives vary continuously (see the proof of Theorem~\ref{T:ThmA-more-general} for a discussion of this type of continuity for the Ricci tensor). However, the injectivity radius and volume need not vary continuously along the sequence. To see this, let $g_i$ be a sequence of invariant Einstein metrics on $W_{i,i+1}$, with Einstein constant $1$. By Theorem~\ref{intthm:precompactness}, $g_i$ converges to an Einstein metric on $W_{1,1}$. Since the curvature remains uniformly bounded while the underlying topology changes in the limit, it follows that $\inj_{g_i}(W_{i, i+1}) \to 0$ (see, e.g., \cite[Thm.~9.8]{AH}). Furthermore, $\vol_{g_i}(W_{i, i+1})\to 0$ by \cite[Prop.~2]{Wan82}. Finally, we remark that the convergence in Theorem~\ref{intthm:precompactness} is an instance of the \emph{algebraic convergence} described in \cite[Sec.~6.3]{Lau12}.\smallskip

We now outline the proof of Theorem~\ref{intthm:precompactness}. First, the assumption of submersion type enables us to descend to a sequence of metrics on the common base of the torus bundles. By using the Gap theorem (see \cite{BLS19}) and the Einstein condition in conjunction with a compactness result for a sequence of homogeneous metrics collapsing with bounded curvature (see \cite{KPS26}, cf.~\cite{Ped19}), we obtain uniform bounds along the sequence for the metric eigenvalues in the horizontal directions. By proving uniform bounds on Lie algebraic data along the sequence of total spaces (see Section~\ref{sec:homtorbun}), we are then able to obtain uniform bounds on the metric eigenvalues in the fibre directions. This establishes convergence in the Euclidean topology of the sequence of metrics to a metric on the limit manifold. Finally, combining the metric convergence with the continuity of the relevant algebraic quantities, we obtain continuity of the curvature tensors and conclude that the limit metric is Einstein.\smallskip

The proof of Theorem~\ref{intthm:generic} is representation-theoretic. We examine the isotropy representation of $\G/\H\A$, and thereby describe the set of manifolds that are not of submersion type in terms of kernels of weights of the $\A$-action. This allows us to deduce that, in the Grassmannian topology, this set is nowhere dense and of measure zero.\smallskip

This article is organized as follows. Section~\ref{sec:comphomsp} contains general background information about compact homogeneous spaces. In Section~\ref{sec:homtorbun} we discuss homogeneous torus bundles, their moduli space, and invariant metrics on them. In Section~\ref{sec:closedness_proof}, we derive estimates for homogeneous Einstein metrics and prove Theorem~\ref{intthm:precompactness}. Section~\ref{sec:generic_char} contains the proof of Theorem~\ref{intthm:generic}.

\section{General background about compact homogeneous spaces}\label{sec:comphomsp}
Let $M = \G/\H$ be a homogeneous space with $\G$ and $\H$ compact connected Lie groups. We write $\g$, $\h$ for the Lie algebras of $\G$, $\H$ respectively. Let $Q$ be a biinvariant metric on $\g$, and let $\m$ be the $Q$-orthogonal complement of $\h$ in $\g$. Note that $\m$ is identified with the tangent space to $\G/\H$ at the origin. The isotropy group $\H$ acts on $\m$ by the adjoint action. Under this action, $\m$ can be decomposed into a sum of $Q$-orthogonal, irreducible $\H$-modules.
\begin{equation}\label{eqn:isotropydecomp}
  \m = \m_1 \oplus \cdots \oplus \m_q.
\end{equation}
Two $\Ad(\H)$-modules $\m_i$ and $\m_j$ in $\m$ are said to be equivalent if there exists a linear map $L:\m_i \to \m_j$ such that $\Ad(h)\circ L = L\circ\Ad(h)$ for each $h\in \H$. In general, the decomposition~\eqref{eqn:isotropydecomp} is not unique, however, the number $q$ of submodules and the dimensions $d_i:= \dim\m_i$ are independent of the specific choice of decomposition.\smallskip

We denote the set of all irreducible decompositions of $\m$ by $\F^{\G/\H}$. Note that by \cite[Sec.~4.3]{Boh04}, $\F^{\G/\H}$ is a compact set. We use the notation $\varphi=(\m_1,\ldots, \m_q)$ to denote a decomposition. For brevity and when no confusion arises, we may, by abuse of language, refer to an $\Ad(\H)$-invariant decomposition of $\m$ simply as a decomposition for the homogeneous space $\G/\H$. We will use the symbol $\m_0$ for the trivial $\Ad(\H)$-submodule of $\m$, that is,
\[ \m_0 := \{X \in \m \mid [Z, X] = 0\,\,\, \forall\,Z\in \h\}. \]
By \cite[Lem.~4.27]{Boh04}, $\m_0$ is the Lie algebra of a compact connected complement of $\H$ inside the normalizer $\N_\G(\H)$.\smallskip

Let $\{e_\alpha\}$ be a $Q$-orthonormal basis for $\m$ that is adapted to the decomposition $\varphi=(\m_1,\ldots, \m_q)$. Let
\begin{equation*}
  [ijk]_\varphi := \sum_{e_\alpha\in\m_i}\sum_{e_\beta\in\m_j}\sum_{e_\gamma\in\m_k}Q([e_\alpha, e_\beta], e_\gamma)^2, \quad \text{ for any } 1\leq i,j,k \leq q.
\end{equation*}
Note that these non-negative \emph{bracket constants} $[ijk]_\varphi$ depend on the choice of decomposition $\varphi$ for $\m$, but are independent of the choice of $Q$-orthonormal bases for $\m_i$, $\m_j$ and $\m_k$, and are pairwise symmetric in the indices $i,j,k$. By \cite[Sec.~4.3]{Boh04}, the function $\varphi\mapsto [ijk]_\varphi$ is continuous on $\F^{\G/\H}$.\smallskip

Now, let $g$ be a $\G$-invariant metric on the homogeneous space $\G/\H$. Then $g$ can be identified with an $\Ad(\H)$-invariant inner product on $\m$. Further, we can select a decomposition $\varphi = (\m_1, \dots, \m_q) \in \F^{\G/\H}$ with respect to which $g$ is diagonal, that is,
\begin{equation}\label{eqn:invmetric}
  g = x_1\,Q|_{\m_1} + \cdots + x_q\,Q|_{\m_q}.
\end{equation}

We will call such a decomposition $\varphi$ a {\it diagonalizing decomposition} for $g$. Note that the diagonalizing decomposition for $g$ may not be unique. Given a metric $g$ of the form~\eqref{eqn:invmetric}, its Ricci tensor is given as follows.

\begin{proposition} \label{prop:RicInvMetricGZ}
  If $\{e_\alpha\}$ is a $Q$-orthonormal basis for $\m$ adapted to the decomposition $\m = \m_1\oplus \cdots \oplus \m_q$, and $X\in \m_i$, $Y \in \m_j$, with $i \neq j$, then the Ricci tensor $\Ric(g)$ of the metric~\eqref{eqn:invmetric} satisfies:
  \begin{align*}
    \Ric(g)(X, X) &= \sum_{e_\alpha \in \m_i}\|[X,e_\alpha]_\h\|_Q^2 + \sum_{r,s}\frac{x_i^2 - 2x_r^2 + 2x_rx_s}{4x_rx_s}\sum_{e_\alpha \in \m_r}\|[X,e_\alpha]_{\m_s}\|_Q^2,\\
    \Ric(g)(X,Y) &= \sum_{r,s}\frac{x_ix_j - 2x_r^2 + 2x_rx_s}{4x_rx_s}\sum_{e_\alpha \in \m_r} Q([X,e_\alpha]_{\m_s}, [Y,e_\alpha]_{\m_s}).
  \end{align*}

\end{proposition}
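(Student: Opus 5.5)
The plan is to derive the formula from the standard expression for the Ricci tensor of a $\G$-invariant metric on a reductive homogeneous space $\G/\H$ with $\G$ compact. This formula goes back to Besse (Cor.~7.38) and, in this diagonal form, to Wang--Ziller and Park--Sakane. Write $g$ for the inner product on $\m$ and $\{X_a\}$ for a $g$-orthonormal basis. We may take $X_a = e_a/\sqrt{x_r}$ for $e_a\in\m_r$. Since $\G$ is compact and $Q$ is biinvariant, the mean curvature vector $Z=\sum U(X_a,X_a)$ vanishes (unimodularity). The formula then reads
\[
\Ric(X,Y) = -\tfrac12\sum_a g([X,X_a]_\m,[Y,X_a]_\m) - \tfrac12 B(X,Y) + \tfrac14\sum_{a,b} g([X_a,X_b]_\m,X)\,g([X_a,X_b]_\m,Y),
\]
where $B$ is the Killing form, so that $-B = Q$ after the usual normalisation. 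I will assume the normalisation $Q=-B$. Otherwise one keeps the term $-\tfrac12 B$ and the argument is the same. The key steps are as follows.

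First, express each term in $Q$-data. For $X\in\m_i$, $Y\in\m_j$ and $e_\alpha\in\m_r$, split $[X,e_\alpha]_\m=\sum_s[X,e_\alpha]_{\m_s}$. Then the first term equals
\[
-\tfrac12\sum_{r,s}\frac{x_s}{x_r}\sum_{e_\alpha\in\m_r}Q([X,e_\alpha]_{\m_s},[Y,e_\alpha]_{\m_s}).
\]
For the third term, use $g(\cdot,X)=x_iQ(\cdot,X)$ together with the $\operatorname{ad}$-invariance of $Q$, namely $Q([e_\alpha,e_\beta],X)=-Q([X,e_\beta],e_\alpha)$. This rewrites it as
\[
\tfrac14\sum_{r,s}\frac{x_ix_j}{x_rx_s}\sum_{e_\alpha\in\m_r}Q([X,e_\alpha]_{\m_s},[Y,e_\alpha]_{\m_s}).
\]
Note that the sum runs over pairs $e_\alpha\in\m_r$, $e_\beta\in\m_s$, and that $Q([X,e_\alpha],e_\beta)$ is the $e_\beta$-component of $[X,e_\alpha]_{\m_s}$.

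Second, expand the Killing term. Since $-B(X,Y)=-\operatorname{tr}(\operatorname{ad}X\operatorname{ad}Y)$ over all of $\g$, split the trace over a $Q$-orthonormal basis of $\h\oplus\m$. Using the skew-symmetry of $\operatorname{ad}$ with respect to $Q$, one gets
\[
Q(X,Y)=\sum_{e_\alpha\in\m}Q([X,e_\alpha]_\h,[Y,e_\alpha]_\h)+\sum_{r,s}\sum_{e_\alpha\in\m_r}Q([X,e_\alpha]_{\m_s},[Y,e_\alpha]_{\m_s})+\sum_{h\in\h}Q([X,h]_\m,[Y,h]_\m).
\]
The last sum equals the $\h$-contribution, again by invariance of $Q$. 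This yields $2\sum_\alpha Q([X,e_\alpha]_\h,[Y,e_\alpha]_\h)$ plus the $\m$-part. Multiplying by $\tfrac12$ and adding the other two terms collects the coefficient of each $(r,s)$ sum as
\[
\tfrac12-\tfrac{x_s}{2x_r}+\tfrac{x_ix_j}{4x_rx_s}=\frac{x_ix_j-2x_s^2+2x_rx_s}{4x_rx_s}.
\]
The roles of $r$ and $s$ can be exchanged by the symmetry $Q([X,e_\alpha],e_\beta)=-Q([X,e_\beta],e_\alpha)$. This swaps $x_r$ and $x_s$ in the numerator and gives the stated form $x_ix_j-2x_r^2+2x_rx_s$.

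Third, treat the $\h$-term. For $X=Y$ it gives $\sum_{e_\alpha\in\m_i}\|[X,e_\alpha]_\h\|^2$, with the sum restricted to $\m_i$: $[X,e_\alpha]_\h\neq 0$ forces $Q(e_\alpha,[\h,X])\neq0$, and $[\h,X]\subset\m_i$ by $\Ad(\H)$-invariance. For $i\neq j$, the term $\sum_\alpha Q([X,e_\alpha]_\h,[Y,e_\alpha]_\h)=\sum_\alpha Q(e_\alpha,[h,X])\cdots$ involves components in $\m_i$ and $\m_j$ simultaneously, so it vanishes. This also explains why the off-diagonal formula contains no $\h$-term.

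The main obstacle is bookkeeping: keeping the normalisation $Q$ versus $-B$ consistent (if $Q\neq -B$, the stated formula implicitly absorbs $-B$ via the decomposition above only when $Q=-B$), and correctly tracking the $r\leftrightarrow s$ relabelling in the coefficients. I would check the diagonal case first against the known formula $r_k = \frac{b_k}{2x_k}+\frac{1}{4d_k}\sum\frac{x_k}{x_ix_j}[ijk]-\frac1{2d_k}\sum\frac{x_j}{x_kx_i}[kij]$ as a sanity test.
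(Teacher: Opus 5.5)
Your computation is correct, but it takes a different route from the paper. The paper does not derive the formula. It cites the Grove--Ziller Ricci formula for cohomogeneity one metrics \cite[Prop.~1.14]{GZ02} and obtains the homogeneous case by discarding the terms that involve derivatives (the second fundamental form contributions). You instead start from Besse's general formula for a unimodular reductive homogeneous space and expand each piece in $Q$-data. The three pieces give the coefficients $\tfrac12$, $-\tfrac{x_s}{2x_r}$ and $\tfrac{x_ix_j}{4x_rx_s}$. The symmetry $T_{rs}=T_{sr}$, where $T_{rs}=\sum_{e_\alpha\in\m_r}Q([X,e_\alpha]_{\m_s},[Y,e_\alpha]_{\m_s})$, turns $-2x_s^2$ into $-2x_r^2$. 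Finally, $\Ad(\H)$-invariance and $Q$-orthogonality of the $\m_k$ confine the $\h$-term to $\m_i$ and make it vanish off the diagonal. Your route gives a self-contained derivation in which every coefficient can be checked, and your Park--Sakane sanity check is consistent with it. The paper's citation is shorter but leaves the adaptation from the cohomogeneity one setting to the reader.

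One correction: you neither need nor should impose $Q=-B$. For any biinvariant $Q$ the map $\operatorname{ad}_X$ is $Q$-skew. Computing the trace in a $Q$-orthonormal basis $\{e\}$ of $\g$ therefore gives $-B(X,Y)=\sum_{e}Q([X,e],[Y,e])$ for every biinvariant $Q$. So the left-hand side of your expansion in the second step should read $-B(X,Y)$, not $Q(X,Y)$. With that change your argument proves the statement for arbitrary $Q$, and your closing remark that the formula absorbs $-B$ only when $Q=-B$ is wrong. This matters because the paper's $Q$ is an arbitrary biinvariant metric. On a semisimple but non-simple $\g$, or on a $\g$ with centre as allowed in Section~\ref{sec:comphomsp}, $Q$ need not be proportional to $-B$, and in the latter case $Q=-B$ is not even possible. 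When $\g$ is simple, the right-hand side is invariant under rescaling $Q$, so there the normalization would be harmless.
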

\begin{proof}
  We refer to \cite[Prop.~1.14 (a),(b)]{GZ02} (cf.~\cite[(7.38)]{Bes08}). Although the formulae there are for cohomogeneity one manifolds, the corresponding terms for a homogeneous space can be deduced by subtracting the second fundamental form contribution, i.e., terms that involve derivatives. Then the result follows directly after adapting notation.
\end{proof}

\section{Homogeneous torus bundles}\label{sec:homtorbun}
In this section, we introduce the class of homogeneous torus bundles that forms the setting for our study of Einstein metrics and their convergence. We begin by discussing the algebraic structure of these bundles. We then endow this class of homogeneous torus bundles with a topology and establish several continuity results and uniform estimates that play a central role in the proofs of our main results. Finally, we recall some basic facts about the Ricci tensor of Riemannian submersions, specialized to the case of homogeneous torus bundles.\smallskip

Let $\G$ be a compact connected semisimple Lie group, let $\H$ be a closed connected semisimple subgroup of $\G$, and assume that $\G/\H$ is simply connected and almost effective. We let $\N_\G(\H)$ denote the normalizer of $\H$ in $\G$ and $\A$ a maximal torus in $\N_\G(\H)/\H$ and set $a:=\dim\A\ge2$. Note that $\a$, the Lie algebra of $\A$, is a maximal abelian subalgebra of $\m_0$, where $\m_0$ is as in Section~\ref{sec:comphomsp}. Denote by $\n$ the $Q$-orthogonal complement of $\h\oplus\a$ in $\g$, and note that $\n$ is identified with the tangent space to $\G/\H\A$ at the origin.\smallskip

Now, let $0<\ell<a$, and let $\tilde\A\subseteq\A$ be a compact subtorus with $\dim\tilde\A=a-\ell$. Denote by $\tilde\a$ its Lie algebra, and let $\t$ be the $Q$-orthogonal complement of $\tilde\a$ in $\a$. Let $\T$ be the ($\ell$-dimensional) connected subgroup of $\A$ with Lie algebra $\t$, which can be identified with $\A/\tilde\A$. Since $\A \subset \N_\G(\H)$, this gives rise to a homogeneous $\ell$-torus bundle:
\begin{equation}\label{eqn:homtorusbundle}
  \T \to \G/\H\tilde\A \overset{\pi}{\to} \G/\H\A.
\end{equation}
Observe that since $\G/\H$ is simply connected and $\tilde\A$ is connected, the total space $\G/\H\tilde\A$ is simply connected.\smallskip

Now let $\tilde\n$ denote the $Q$-orthogonal complement of $\h\oplus\tilde\a$ in $\g$. Then $\tilde\n$ is identified with the tangent space to $\G/\H\tilde\A$ at the origin. Thus, at the Lie algebra level, the bundle~\eqref{eqn:homtorusbundle} corresponds to the $Q$-orthogonal splitting:
\begin{equation}\label{eqn:Torus-Bunlde-Algebraic}
  \g = (\h \oplus \tilde\a) \oplus \tilde\n = (\h \oplus \tilde\a) \oplus \t \oplus \n.
\end{equation}
Observe that $\t$ and $\n$ are identified with tangent spaces to fibre and base respectively. A $\G$-invariant metric $g$ on $\G/\H\tilde\A$ can be identified with an $\Ad(\H\tilde\A)$-invariant inner product on $\tilde\n = \t \oplus \n$, also denoted by $g$ by abuse of notation.

\begin{remark}\label{Rem:Locally_Hom}
  If $\tilde\A$ is not closed in $\A$, then $M=\G/\H\tilde\A$ is not Hausdorff. In particular, $M$ is not a smooth manifold, and hence cannot be realized as a globally homogeneous space. Nevertheless, there exists a well-defined \emph{local factor space}, obtained roughly by quotienting a neighbourhood of the identity in $\G$ by a neighbourhood of the identity in $\H\tilde\A$. This local factor space is a topological manifold admitting a real analytic structure, unique up to $\mathcal{C}^\omega$-diffeomorphism (see \cite[Prop.~6.1]{Ped20} for a precise definition), which we by abuse of notation continue to denote by $\G/\H\tilde\A$. Furthermore, it is a \emph{locally homogeneous space}, i.e., for every pair of points $p,q\in M$, there exists $\varepsilon>0$ such that the neighbourhoods $B_\varepsilon(p)$ and $B_\varepsilon(q)$ are isometric. In this setting, \eqref{eqn:homtorusbundle} becomes a locally homogeneous torus bundle.

  The Lie algebraic description of these spaces is identical to that of the globally homogeneous case. In particular, the decomposition~\eqref{eqn:Torus-Bunlde-Algebraic} remains valid. Furthermore, with a slight abuse of language, we shall refer to $\Ad(\H\tilde\A)$-invariant inner products on $\tilde\n$ as $\G$-invariant metrics on $\G/\H\tilde\A$.
\end{remark}

\subsection{Moduli space of homogeneous torus bundles}\label{subsec:moduli_space}
In this subsection, we describe a natural topology on the moduli space of (locally) homogeneous torus bundles with fixed base and fibre dimension. Further, we prove several convergence results that will be used in the proof of our main theorems.

We denote by $\K(\G,\H,\ell)$ the collection of all (locally) homogeneous spaces of the form $\G/\H\tilde\A$, appearing as the total space of the $\ell$-torus bundle~\eqref{eqn:homtorusbundle}. Unless explicitly stated otherwise, the term \emph{homogeneous space} always refers to a globally homogeneous space. We equip $\K(\G,\H,\ell)$ with a topology via the natural bijection
\[
  \Gr_{a-\ell}(\a) \to \K(\G,\H,\ell), \quad
  \tilde\a \mapsto \G/\big(\H\cdot\exp(\tilde\a)\big),
\]
where $\Gr_{a-\ell}(\a)$ denotes the Grassmannian of $(a-\ell)$-dimensional subspaces of $\a$. We call this topology on $\K(\G,\H,\ell)$ the \emph{Grassmannian topology}. Since $\Gr_\ell(\a)$ is canonically isomorphic to $\Gr_{a-\ell}(\a)$, a sequence $\G/\H\tilde\A_n$ converges to $\G/\H\tilde\A$ in $\K(\G,\H,\ell)$ if and only if $\tilde\a_n^\perp$ converges to $\tilde\a^\perp$ in $\Gr_\ell(\a)$. Finally, we denote by $V_\ell(\a)$ the Stiefel manifold of $Q$-orthonormal $\ell$-frames in $\a$ and write \mbox{$\rho:V_\ell(\a) \to \Gr_\ell(\a)$} for the canonical bundle map which sends a given $Q$-orthonormal $\ell$-frame to the $\ell$-dimensional subspace spanned by it.

\begin{proposition}\label{prop:closed_groups_dense}
  The collection of all (globally) homogeneous spaces in $\K(\G,\H,\ell)$ forms a dense subset.
\end{proposition}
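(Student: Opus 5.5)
The plan is to reduce the statement to a density claim for rational subspaces in the Grassmannian. Since $\A$ is a compact torus, we may write $\A=\a/\Lambda$ for the integer lattice $\Lambda=\ker(\exp|_\a)$, which has full rank $a$ in $\a$. The connected subgroup $\exp(\tilde\a)$ is closed in $\A$, and hence a compact subtorus, if and only if $\tilde\a$ is a \emph{rational} subspace, i.e.\ $\tilde\a\cap\Lambda$ spans $\tilde\a$. In that case $\H\tilde\A$ is a closed subgroup of $\G$, and $\G/\H\tilde\A$ is a globally homogeneous space. So it suffices to show that the rational $(a-\ell)$-dimensional subspaces are dense in $\Gr_{a-\ell}(\a)$, and then transport this through the bijection defining the Grassmannian topology on $\K(\G,\H,\ell)$.

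For the density, fix a basis of $\Lambda$. This identifies $\a\cong\R^a$ and $\Lambda\cong\Z^a$, and rational subspaces become those spanned by vectors in $\Q^a$. Take $\tilde\a\in\Gr_{a-\ell}(\a)$ with basis $v_1,\dots,v_{a-\ell}$. Since $\Q^a$ is dense in $\R^a$, we can choose $v_j^{(n)}\in\Q^a$ with $v_j^{(n)}\to v_j$. Linear independence is an open condition (a nonvanishing minor), so for $n$ large the $v_j^{(n)}$ are linearly independent and span a rational subspace $\tilde\a_n$ of dimension $a-\ell$.

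It remains to check that $\tilde\a_n\to\tilde\a$ in $\Gr_{a-\ell}(\a)$. This holds because the map from linearly independent $(a-\ell)$-tuples to their span is continuous; indeed it is the quotient map defining the Grassmannian topology. One can equally argue via Gram--Schmidt and the Stiefel map $\rho$, or, matching the description in the text, via convergence of $Q$-orthogonal complements $\tilde\a_n^\perp\to\tilde\a^\perp$ in $\Gr_\ell(\a)$.

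Finally, we pass to $\K(\G,\H,\ell)$. The subtorus $\tilde\A_n=\exp(\tilde\a_n)$ is compact and connected. Hence $\G/\H\tilde\A_n$ is a globally homogeneous space and an honest $\ell$-torus bundle over $\G/\H\A$, and these spaces converge to $\G/\H\tilde\A$. The only point needing care is the rationality criterion for closedness: the closure of $\exp(\tilde\a)$ is a subtorus whose Lie algebra is the smallest rational subspace containing $\tilde\a$. This is a standard fact, and I expect no real obstacle here.
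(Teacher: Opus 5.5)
Your proof is correct. It shares the paper's overall strategy (closed subtori $\leftrightarrow$ rational subspaces, and rational subspaces are dense), but the mechanism is different.

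The paper approximates a $Q$-orthonormal frame of the \emph{complement} $\t=\tilde\a^\perp$ by $Q$-orthonormal frames with rational coordinates. It does this in a $Q$-orthogonal basis of lattice vectors, using density of $\O(a,\Q)$ in $\O(a)$, which keeps the argument inside the orthonormal-frame picture $V_\ell(\a)$ used later for decompositions. You instead approximate an arbitrary basis of $\tilde\a$ itself by vectors of $\Q^a$ in a lattice basis. You use only density of $\Q^a$ in $\R^a$, openness of linear independence, and the fact that the topology of $\Gr_{a-\ell}(\a)$ does not depend on any inner product.

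What this buys you is independence from $Q$. The paper's route tacitly needs $Q$ to be compatible with $\ker(\exp|_\a)$. This is needed both for rational orthonormal frames to be dense and for rationality of $\t$ to pass to $\tilde\a$, whose closedness is what actually decides global homogeneity. That compatibility can fail. Take $\G=\SU(2)\times\SU(2)$ with $\H$ trivial, let $e_1,e_2$ be the primitive lattice generators of the two circle factors, and suppose $Q(e_1,e_1)/Q(e_2,e_2)\notin\Q$. Then the line $\R(e_1+e_2)$ is rational, but its $Q$-complement $\R\bigl(Q(e_2,e_2)e_1-Q(e_1,e_1)e_2\bigr)$ is not. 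Your argument is unaffected by this and works for every bi-invariant $Q$.
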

\begin{proof}
  Recall that the Lie algebra of any $\ell$-torus admits a $Q$-orthogonal basis $\mathcal{B}=\{e_1,\dots,e_\ell\}$ such that \mbox{$\exp(t\cdot e_i)=1$} if and only if $t\in\Z$. Note that if a Lie subalgebra $\t\subset\a$ admits a $Q$-orthonormal $\ell$-frame $\mathcal{F}$ whose coordinates with respect to $\mathcal{B}$ are elements of $\Q$, then $\exp(\t)$ is closed. It is sufficient to show that any $Q$-orthonormal $\ell$-frame $\mathcal{F}'$ can be arbitrarily closely approximated by a $Q$-orthonormal $\ell$-frame having rational coordinates. Since $V_\ell(\a)\cong\O(a)/\O(a-\ell)$ and $\Gr_\ell(\a)\cong\O(a)/\O(\ell)\O(a-\ell)$, this approximation follows immediately from the fact that $\O(a, \Q)$ is dense in $\O(a)$.
\end{proof}

We will now introduce notation related to isotropy decompositions of spaces in $\K(\G,\H,\ell)$. Accordingly, let $\G/\H\tilde\A\in \K(\G,\H,\ell)$ be a (locally) homogeneous space with $\tilde\n=\t\oplus\n$ as in~\eqref{eqn:Torus-Bunlde-Algebraic}. Consider an $\Ad(\H\A)$-irreducible decomposition $\varphi\in\F^{\G/\H\A}$ of $\n$, given by
\begin{equation}\label{eqn:isodecGHss}
  \n=\n_1\oplus\cdots\oplus\n_r.
\end{equation}
Since the $\Ad(\H\A)$-action on $\t$ is trivial, every $\Ad(\H\A)$-irreducible decomposition of $\t$ is naturally identified with an $\ell$-frame $\hat\varphi\in V_\ell(\a)$ satisfying $\rho(\hat\varphi)=\t$, and conversely every such frame determines a decomposition of $\t$. Consequently, an $\Ad(\H\A)$-irreducible decomposition of $\tilde\n=\n\oplus\t$ is determined by a pair
\[
  \tilde\varphi=(\varphi,\hat\varphi)\in\F^{\G/\H\A}\times V_\ell(\a),
  \quad \rho(\hat\varphi)=\t,
\]
and gives rise to
\begin{equation}\label{eqn:isodecGHt}
  \tilde\n
  =\n_1\oplus\cdots\oplus\n_r
  \oplus\n_{r+1}\oplus\cdots\oplus\n_{r+\ell}.
\end{equation}
This decomposition will be used repeatedly throughout the paper. Note that the decomposition~\eqref{eqn:isodecGHt} is $\Ad(\H\tilde\A)$-invariant but, in general, not $\Ad(\H\tilde\A)$-irreducible.\smallskip

We now prove a convergence result for (locally) homogeneous spaces in $\K(\G,\H,\ell)$ together with the associated decompositions. Although straightforward, this result is central to the arguments leading to the proof of Theorem~\ref{intthm:precompactness}.

\begin{proposition}\label{prop:Basis_diagonalize}
  Consider a sequence $(M_n, \tilde\varphi_n)_{n\in\mathbb{N}}$ in $\K(\G,\H,\ell)\times(\F^{\G/\H\A}\times V_\ell(\a))$, where for each $n$, $\tilde\varphi_n$ is a decomposition for $M_n=\G/\H\tilde\A_n$. Then there exists $(M,\tilde\varphi)$ with $M = \G/\H\tilde\A\in \K(\G,\H,\ell)$ and a decomposition $\tilde\varphi\in \F^{\G/\H\A}\times V_\ell(\a)$ for $M$ such that, up to passing to a subsequence, $M_n \to M$ and $\tilde\varphi_n \to \tilde\varphi.$
\end{proposition}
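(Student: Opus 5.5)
The plan is a compactness argument, carried out in three factors. Write $\tilde\varphi_n=(\varphi_n,\hat\varphi_n)$ with $\varphi_n\in\F^{\G/\H\A}$ and $\hat\varphi_n\in V_\ell(\a)$ satisfying $\rho(\hat\varphi_n)=\t_n:=\tilde\a_n^\perp$.

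\textbf{Step 1: a convergent subsequence.} By \cite[Sec.~4.3]{Boh04}, $\F^{\G/\H\A}$ is compact. The Stiefel manifold $V_\ell(\a)\cong\O(a)/\O(a-\ell)$ is compact, being a closed and bounded subset of $\a^\ell$ cut out by the orthonormality equations. Hence the product $\F^{\G/\H\A}\times V_\ell(\a)$ is compact, and both spaces are metrizable as subsets of products of Grassmannians and Euclidean space. We may therefore pass to a subsequence along which $\varphi_n\to\varphi\in\F^{\G/\H\A}$ and $\hat\varphi_n\to\hat\varphi\in V_\ell(\a)$.

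\textbf{Step 2: the limit space.} Set $\t:=\rho(\hat\varphi)$ and let $\tilde\a:=\t^\perp$, its $Q$-orthogonal complement in $\a$. Let $M:=\G/\H\exp(\tilde\a)$, understood as a locally homogeneous space in the sense of Remark~\ref{Rem:Locally_Hom} if $\exp(\tilde\a)$ is not closed. Then $M\in\K(\G,\H,\ell)$. The map $\rho$ is continuous, since the span of an orthonormal frame depends continuously on the frame; for instance, the orthogonal projection $\sum_j Q(\cdot,e_j)e_j$ varies continuously. It follows that $\t_n=\rho(\hat\varphi_n)\to\t$ in $\Gr_\ell(\a)$. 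By the characterization of convergence in the Grassmannian topology stated above, this is equivalent to $M_n\to M$ in $\K(\G,\H,\ell)$.

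\textbf{Step 3: the limit is a decomposition for $M$.} It remains to check that $\tilde\varphi=(\varphi,\hat\varphi)$ is a decomposition for $M$. The first component $\varphi$ is an $\Ad(\H\A)$-irreducible decomposition of $\n$. Since $\n$ is the same space for every member of $\K(\G,\H,\ell)$, this component is independent of $n$ and needs no further check. For the second component, $\hat\varphi$ is a $Q$-orthonormal $\ell$-frame with $\rho(\hat\varphi)=\t$ by construction. By the identification preceding \eqref{eqn:isodecGHt}, the pair $(\varphi,\hat\varphi)$ therefore determines a decomposition of $\tilde\n=\n\oplus\t$ for $M$.

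The argument is essentially routine. The only points needing care are the compactness of $\F^{\G/\H\A}$, which is imported from \cite{Boh04}, and the continuity of $\rho$. Together these ensure that the constraint $\rho(\hat\varphi_n)=\t_n$ passes to the limit, so that the limit frame spans exactly the complement of the limit subalgebra $\tilde\a$.
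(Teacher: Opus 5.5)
Your proof is correct and is essentially the paper's argument: compactness of $\F^{\G/\H\A}\times V_\ell(\a)$ together with continuity of $\rho$, so that the constraint $\rho(\hat\varphi_n)=\t_n$ passes to the limit. The only cosmetic difference is that you define the limit space $M$ directly from $\rho(\hat\varphi)$. The paper instead first extracts $M_n\to M$ using compactness of $\K(\G,\H,\ell)$, and then identifies $\rho(\hat\varphi)$ with the limiting $\t$ by uniqueness of limits.
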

\begin{proof}
  By compactness of $\K(\G,\H,\ell)$ and $\F^{\G/\H\A}\times V_\ell(\a)$, there exists $M \in \K(\G,\H,\ell)$ such that $M_n \to M$ and there exists $\tilde\varphi = (\varphi, \hat\varphi)\in \F^{\G/\H\A}\times V_\ell(\a)$ such that $(\varphi_n, \hat\varphi_n) = \tilde\varphi_n \to \tilde\varphi$. It remains to show that $\tilde\varphi$ is a decomposition for $M$. That is, we need to show that the span of the $\ell$-frame $\hat\varphi$ equals the element $\t \in \Gr_{\ell}(\a)$ corresponding to $M$. In order to see this, note that since $\tilde\varphi_n$ is a decomposition for $M_n$, for each $n$ we have $\rho(\hat\varphi_n) = \t_n$, where $\t_n \in \Gr_{\ell}(\a)$ is the element corresponding to $M_n$. Since $\rho$ is a bundle map, and since $\t_n \to \t$, we can take limits, and have $\rho(\hat\varphi) = \t$.
\end{proof}
Recall that by \cite[Sec.~4.3]{Boh04}, on a fixed homogeneous space, the bracket constants depend continuously on the isotropy decomposition. In the next proposition, we observe that for the homogeneous spaces under consideration, this continuity remains valid even when the space itself varies.

\begin{proposition}\label{prop:BracketsCont}
  For each $1\leq i, j, k\leq r+\ell$, the function $\tilde\varphi \mapsto [ijk]_{\tilde\varphi}$ is a continuous function on $\F^{\G/\H\A}\times V_\ell(\a)$.
\end{proposition}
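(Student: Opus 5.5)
Since $Q$ is fixed and the Lie bracket of $\g$ is a fixed bilinear map, every bracket constant $[ijk]_{\tilde\varphi}$ is a polynomial expression in data that vary continuously with $\tilde\varphi$. The plan is to write each constant as a trace of compositions of $Q$-orthogonal projections, and then check that these projections depend continuously on $\tilde\varphi=(\varphi,\hat\varphi)$.

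\textbf{Step 1 (basis-free formula).} For a decomposition $\tilde\varphi$ as in~\eqref{eqn:isodecGHt}, let $P_i$ denote the $Q$-orthogonal projection of $\g$ onto $\n_i$, for $1\le i\le r+\ell$. Choose $Q$-orthonormal bases adapted to the summands. Then
\[
  [ijk]_{\tilde\varphi}=\sum_{\alpha,\beta,\gamma}Q\big([P_ie_\alpha,P_je_\beta],P_ke_\gamma\big)^2,
\]
where now $\{e_\alpha\}$ is any fixed $Q$-orthonormal basis of $\g$. This follows because $\sum_\alpha Q(P_ie_\alpha,\cdot)\,P_ie_\alpha$ is independent of the basis chosen, and the whole expression is basis-invariant. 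In particular, the right-hand side is a continuous (indeed polynomial) function of the triple $(P_i,P_j,P_k)\in\End(\g)^3$.

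\textbf{Step 2 (continuity of the projections).} It remains to show that $\tilde\varphi\mapsto P_i$ is continuous for each $i$.
\begin{itemize}
  \item For $1\le i\le r$, the summand $\n_i$ is determined by $\varphi\in\F^{\G/\H\A}$ alone. The topology on $\F^{\G/\H\A}$ in \cite[Sec.~4.3]{Boh04} is the one induced by the product of Grassmannians, equivalently by the projections. So $P_i$ depends continuously on $\varphi$; this is exactly the input Böhm uses to get continuity of the bracket constants for a fixed space.
  \item For $r+1\le i\le r+\ell$, the summand is $\n_{r+m}=\R v_m$, where $\hat\varphi=(v_1,\dots,v_\ell)\in V_\ell(\a)$. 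The projection is $P_{r+m}X=Q(X,v_m)v_m$, which is manifestly continuous in the frame.
\end{itemize}
Combining Steps 1 and 2 gives continuity of $[ijk]_{\tilde\varphi}$ on the product $\F^{\G/\H\A}\times V_\ell(\a)$.

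\textbf{Main obstacle (why the varying space causes no trouble).} One must check that the domain is genuinely the full product, independent of $M$, even though $\tilde\n=\n\oplus\t$ changes with $\tilde\a$. This is harmless for two reasons.
\begin{itemize}
  \item The bracket constants are computed inside the fixed algebra $(\g,Q)$, so they depend only on the subspaces $\n_i$ and not on which $\tilde\a$ they are regarded as a decomposition for.
  \item The space $\t=\rho(\hat\varphi)$ is itself determined by $\hat\varphi$.
\end{itemize}
Consequently the index set $\{1,\dots,r+\ell\}$ is the same for every $\tilde\varphi$; here $r$ is fixed, being the number of irreducible summands of the fixed space $\G/\H\A$.
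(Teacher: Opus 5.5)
Your proof is correct and is essentially the paper's argument: both write the bracket constants as polynomial expressions in data that vary continuously with the summands, and both use that the constants do not depend on the choice of adapted basis. The only difference is technical. The paper picks $Q$-orthonormal bases of $\n_i^{(n)}$ converging to bases of $\n_i$, after a relabelling. Your basis-free formula $\sum_{\alpha,\beta,\gamma}Q([P_ie_\alpha,P_je_\beta],P_ke_\gamma)^2$, taken over a fixed basis of $\g$, reduces everything to continuity of the projections $P_i$, so you need neither converging adapted bases nor any relabelling.
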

\begin{proof}
  Suppose $\tilde\varphi_n$ is a sequence in $\F^{\G/\H\A}\times V_\ell(\a)$, converging to $\tilde\varphi \in \F^{\G/\H\A}\times V_\ell(\a)$. We need to show that $[ijk]_{\tilde\varphi_n}$ converges to $[ijk]_{\tilde\varphi}$. For the sake of notation, we write $\tilde\varphi_n = (\n_1^{(n)}, \dots, \n_r^{(n)}, \n_{r+1}^{(n)}, \dots, \n_{r+\ell}^{(n)})$ and $\tilde\varphi = (\n_1, \dots, \n_r, \n_{r+1}, \dots, \n_{r+\ell})$. By possibly relabelling, we can assume that $\n_i^{(n)} \to \n_i$ for each $i$. Hence, for each $i$, there exists a $Q$-orthonormal basis for $\n_i^{(n)}$ which converges to a $Q$-orthonormal basis for $\n_i$. For each $n$ and for each $i,j,k$, the function defining the bracket constant $[ijk]_{\tilde\varphi_n}$ is a polynomial function in the elements of the chosen $Q$-orthonormal bases of $\n_i^{(n)}$, $\n_j^{(n)}$, $\n_k^{(n)}$, so it is straightforward to see it converges to the function defining the bracket constant $[ijk]_{\tilde\varphi}$. Moreover, the bracket constants are independent of choice of $Q$-orthonormal basis used to define them. This completes the proof.
\end{proof}

\subsection{Further algebraic properties of homogeneous torus bundles} In this subsection, we investigate some algebraic data of homogeneous torus bundles. We begin with bracket constants.

\renewcommand{\theenumi}{\alph{enumi}}\renewcommand{\labelenumi}{(\theenumi)}\leftmargini.78\leftmargini
\begin{proposition}\label{prop:bracconst}
  Let $\G/\H\tilde\A\in\K(\G,\H,\ell)$ and $\tilde\varphi\in \F^{\G/\H\A} \times V_\ell(\a)$ be a decomposition of $\tilde \n$ as in~\eqref{eqn:isodecGHt}. Then the bracket constants satisfy:
  \begin{enumerate}
    \item\label{item:bracconst_i} $[ijk]_{\tilde\varphi} = 0$ if $i,j\in\{r+1, \dots, r+\ell\}$,
    \item\label{item:bracconst_ii} $[ijk]_{\tilde\varphi} = 0$ if $i\in \{r+1, \dots, r+\ell\}$, $j,k\in \{1, \dots, r\}$, $j\neq k$.
  \end{enumerate}
\end{proposition}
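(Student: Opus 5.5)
The plan is to reduce both statements to two structural facts about $\t\subset\a$: the subalgebra $\a$ is abelian, and $\a\subset\m_0$ commutes with $\h$ and normalizes the relevant modules. Throughout, $\n_{r+1},\dots,\n_{r+\ell}$ are one-dimensional, spanned by the vectors of the frame $\hat\varphi$, all of which lie in $\t\subset\a$.

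\emph{Part (a).} Take $i,j\in\{r+1,\dots,r+\ell\}$. Each summand of $[ijk]_{\tilde\varphi}$ contains a factor $Q([e_\alpha,e_\beta],e_\gamma)$ with $e_\alpha,e_\beta\in\t\subset\a$. Since $\a$ is abelian (it is the Lie algebra of the torus $\A$), $[e_\alpha,e_\beta]=0$. So every summand vanishes, whatever $k$ is.

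\emph{Part (b).} Take $i\in\{r+1,\dots,r+\ell\}$ and $j\ne k$ in $\{1,\dots,r\}$, and let $X\in\n_i\subset\a$. By $Q$-biinvariance,
\[
Q([X,e_\beta],e_\gamma)=-Q(e_\beta,[X,e_\gamma]),
\]
so it suffices to show $\operatorname{ad}_X(\n_k)\subset\n_k$. Then $Q([X,e_\beta],e_\gamma)=0$ for $e_\beta\in\n_j$, $e_\gamma\in\n_k$, because $\n_j\perp\n_k$.

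The needed invariance comes from $\varphi\in\F^{\G/\H\A}$. Each $\n_k$ is $\Ad(\H\A)$-invariant, and in particular $\Ad(\A)$-invariant. Differentiating along $\exp(tX)\in\A$ gives $[X,\n_k]\subset\n_k$.

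The only delicate point is this differentiation: we need $\n_k$ to be invariant under the full group $\H\A$, not just under $\H\tilde\A$. This is guaranteed because the decomposition $\varphi$ is taken in $\F^{\G/\H\A}$, i.e.\ it is a decomposition of $\n$ into $\Ad(\H\A)$-irreducible modules. Once that is noted, the rest is routine.
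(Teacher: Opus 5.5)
Your proof is correct and follows the paper's argument. Part (a) rests on $\t\subset\a$ being abelian, and part (b) rests on each $\n_j$ being an $\Ad(\H\A)$-module, hence $\operatorname{ad}(\t)$-invariant, combined with the $Q$-orthogonality of $\n_j$ and $\n_k$; your detour through biinvariance is harmless but not needed, since $[X,e_\beta]\in\n_j\perp\n_k$ already gives the vanishing directly.
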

\begin{proof}
  The claim in~\eqref{item:bracconst_i} is true because if $i,j\in\{r+1, \dots, r+\ell\}$, then $\n_i,\n_j \subseteq \t$, so $[X, Y]=0$ whenever $X\in \n_i$, $Y\in \n_j$. For~\eqref{item:bracconst_ii}, note that since $\tilde\varphi\in \F^{\G/\H\A} \times V_\ell(\a)$, it follows that $\n_j$ (resp. $\n_k$) is an $\Ad(\H\A)$-module, so in particular, $[X, Y]\in \n_j$ whenever $X\in \t$ and $Y\in \n_j$.
\end{proof}

Before proceeding, let us fix some notation. Let $\z(\m_0)$ be the centre of the Lie algebra $\m_0$. Then $\a=\z(\m_0)\oplus\a_0$, where $\a_0$ is a Cartan subalgebra of the semisimple part of $\m_0$. Moreover, let $\m_0^{\rm rs}$ be the direct sum of the root spaces of $\m_0$ associated to the Cartan subalgebra $\a_0$. Let $\m_0^\a$ be the maximal submodule of $\n$ on which $\a$ acts trivially, but $\h$ does not. Finally, let $\m(\h,\a)$ be the maximal submodule of $\n$ on which neither $\h$ nor $\a$ acts trivially. Then, the following is an $\Ad(\H\A)$-invariant (and $Q$-orthogonal) decomposition of $\g$:
\begin{equation}\label{eqn:rough_decom}
  \g=\h\oplus \a \oplus \m_0^{\rm rs} \oplus \m_0^\a\oplus\m(\h,\a).
\end{equation}
Note that the summands above are inequivalent as $\Ad(\H\A)$-modules, so any $\tilde\varphi \in \F^{\G/\H\A} \times V_\ell(\a)$ must respect the decomposition~\eqref{eqn:rough_decom}. Also note that since $\g$ is semisimple, if $\h=0$ then $\z(\m_0) = 0$. Furthermore, in this case~\eqref{eqn:rough_decom} simplifies to $\g = \a \oplus \m_0^{\rm rs}$.\smallskip

For later purposes, we analyze two cases depending on whether $\m(\h,\a)$ is a trivial submodule or not. We begin by recalling the following general result.
\begin{lemma}\label{lem:BK_Product}\cite[Lem.~5.21]{BK23}
  Let $\G/\H$ be an almost effective compact homogeneous space and suppose that $\m = \m_1 \oplus \m_2,$ where $\m_1,\m_2$ are $\Ad(\H)$-invariant, $Q(\m_1,\m_2)=0$, $[\m_1,\m_1]\subset \h\oplus \m_1$, and $[\m_2,\m_2]\subset \h\oplus \m_2.$ Then $\G/\H = \G_1/\H_1 \times \G_2/\H_2,$ where $\h_i=[\m_i,\m_i]_\h$, and $\g_i=\h_i\oplus\m_i$, for $i=1, 2$.
\end{lemma}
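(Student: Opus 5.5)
The plan is to build the two factors explicitly and check that $\g$ splits as an ideal direct sum compatible with $\h$. Set $\h_i := [\m_i,\m_i]_\h$ and $\g_i := \h_i\oplus\m_i$. The goal is to show four things: (1) $\g_1,\g_2$ are ideals of $\g$; (2) $[\g_1,\g_2]=0$; (3) $\g=\g_1\oplus\g_2$; (4) $\h=\h_1\oplus\h_2$. Given these, $\G$ is covered by $\G_1\times\G_2$, the subgroup $\H$ corresponds to $\H_1\times\H_2$, and the homogeneous space is $\G_1/\H_1\times\G_2/\H_2$. Here "$=$" is meant up to the usual covering or almost-effectivity identifications.

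The first key step is $[\m_1,\m_2]=0$. By $\Ad(\H)$-invariance we have $[\h,\m_i]\subset\m_i$. For $X\in\m_1$, $Y\in\m_2$ and $Z\in\h$, this gives
\[
Q([X,Y],Z)=Q(X,[Y,Z])=0,
\]
so $[X,Y]\in\m$. For $W\in\m_1$, we get $Q([X,Y],W)=-Q(Y,[X,W])$. Since $[X,W]\in\h\oplus\m_1$ and $Q(\m_2,\h\oplus\m_1)=0$, this vanishes. The same argument with $W\in\m_2$ gives zero. Hence $[\m_1,\m_2]=0$.

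The next step is to show $[\h_1,\m_2]=0$ and $[\h_2,\m_1]=0$. Take $X,X'\in\m_1$ and $Y\in\m_2$. The Jacobi identity gives $[[X,X'],Y]=0$. Write $[X,X']=h+m$ with $h\in\h$ and $m\in\m_1$. Since $[m,Y]=0$, it follows that $[h,Y]=0$, and so $[\h_1,\m_2]=0$. Symmetrically, $[\h_2,\m_1]=0$.

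It follows that $\g_1$ is a subalgebra: we have $[\m_1,\m_1]\subset\h_1\oplus\m_1$ by definition, $[\h_1,\m_1]\subset\m_1$, and $[\h_1,\h_1]\subset\h_1$. The last inclusion follows from Jacobi, since $[h,[X,X']_\h]=[[h,X],X']_\h+[X,[h,X']]_\h$ and $\ad h$ preserves $\m_1$. The same reasoning shows that $\h_1$ is $\ad(\h)$-invariant, hence an ideal of $\h$. The analogous statements hold for $\h_2$.

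The main obstacle is to show that $\h=\h_1+\h_2$ and $\h_1\cap\h_2=0$; this is where almost-effectiveness enters. Let $\h'$ be the $Q$-orthogonal complement of $\h_1+\h_2$ in $\h$, which is an ideal. For $Z\in\h'$ and $X,X'\in\m_i$ we have $Q([Z,X],X')=-Q(X,[Z,X'])$, which I would rewrite via invariance as $Q(Z,[X,X'])=Q(Z,[X,X']_\h)=0$. Thus $[Z,\m_i]\perp\m_i$, while also $[Z,\m_i]\subset\m_i$, so $[Z,\m]=0$. The ideal $\h'$ therefore acts trivially on $\m$, and by almost-effectiveness $\h'=0$.

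For the intersection, I would show $[\h_1,\h_2]=0$ using $[\h_2,\m_1]=0$: indeed $[h_2,[X,X']]=0$, so $[h_2,[X,X']_\h]=-[h_2,[X,X']_{\m_1}]=0$. Then $\h_1\cap\h_2$ is an ideal that commutes with both $\m_1$ and $\m_2$, and again almost-effectiveness forces it to be trivial up to finite kernel.

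It then follows that $\g_1$ and $\g_2$ commute and span $\g$, so they are complementary ideals. Passing to groups, $\G$ is locally $\G_1\times\G_2$ with $\H$ locally $\H_1\times\H_2$. Connectedness, with $\G$ replaced by a suitable cover, yields the global product statement.
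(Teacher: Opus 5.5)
The paper gives no proof of this lemma; it quotes it from \cite[Lem.~5.21]{BK23}, so there is no internal argument to compare against. Your Lie-algebraic argument is correct:
\begin{itemize}
\item bi-invariance of $Q$ together with $Q(\m_1,\m_2)=0$ gives $[\m_1,\m_2]=0$;
\item the Jacobi identity then gives $[\h_1,\m_2]=[\h_2,\m_1]=0$ and $[\h_1,\h_2]=0$, and $\h_1,\h_2$ are ideals of $\h$;
\item both the $Q$-orthogonal complement of $\h_1+\h_2$ in $\h$ and $\h_1\cap\h_2$ are ideals of $\h$ centralizing $\m$. They are therefore ideals of $\g$ contained in $\h$, which almost effectiveness forbids.
\end{itemize}
This last device is the same one the paper uses for $\h_1^\perp$ in the proof of Lemma~\ref{lem:product_space}. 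The outcome, $\g=\g_1\oplus\g_2$ with commuting ideals $\g_i$ and $\h=\h_1\oplus\h_2$, $\h_i\subset\g_i$, is the real content of the lemma.

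The one step to tighten is your last sentence, which overstates what holds at the group level. Replacing $\G$ by a cover while keeping $\G/\H$ fixed forces you to replace $\H$ by its full preimage, and that preimage need not split. For example, take $\G=\mathsf{SO}(4)$, $\H=\{e\}$, and $\m_1,\m_2$ the two $\mathfrak{su}(2)$ ideals. The preimage of $\H$ in $\SU(2)\times\SU(2)$ is the diagonal subgroup $\{\pm(1,1)\}$, and $\G/\H=\mathsf{SO}(4)$ is only doubly covered by $\G_1\times\G_2\cong S^3\times S^3$. If $\g$ has a centre, the subgroups $\G_i$ need not even be closed: take $\G=\T^2$, $\H=\{e\}$ and $\m_1$ a line of irrational slope.

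What your argument does give is the following. Let $\tilde\G_i$ be simply connected with Lie algebra $\g_i$, and $\tilde\H_i\subset\tilde\G_i$ the connected subgroup with Lie algebra $\h_i$. Then the universal cover of $\G/\H$ is $\tilde\G_1/\tilde\H_1\times\tilde\G_2/\tilde\H_2$; this is a finite cover with compact factors when $\g$ is semisimple. Hence $\G/\H$ itself is such a product when it is simply connected. That is exactly the setting in which the paper applies the lemma ($\g$ semisimple, $\G/\H\A$ simply connected), so you should state the conclusion in that form rather than as an unqualified global splitting.
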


Using the above lemma, we deduce a topological splitting for the case where $\m(\h,\a)=0$.
\begin{lemma}\label{lem:product_space}
  Assume $\G/\H\A$ is a homogeneous space such that $\G/\H$ is almost effective and $\m(\h,\a)=0$. Then $\G=\G_1\times \G_2$ and $\G/\H\A=\G_1/\A\times\G_2/\H$, where $\rank\A=\rank\G_1$.
\end{lemma}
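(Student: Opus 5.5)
If $\m(\h,\a)=0$, then by \eqref{eqn:rough_decom} we have $\g=\h\oplus\a\oplus\m_0^{\rm rs}\oplus\m_0^\a$. The plan is to apply Lemma~\ref{lem:BK_Product} to $\G/\H$ with the splitting $\m=\m_1\oplus\m_2$, where
\[ \m_1:=\a\oplus\m_0^{\rm rs}, \qquad \m_2:=\m_0^\a. \]
Both summands are $\Ad(\H)$-invariant and $Q$-orthogonal. Note that $\h$ acts trivially on $\m_1\subset\m_0$, so $\m_0=\a\oplus\m_0^{\rm rs}$. We also need $\z(\m_0)=0$; more on this below.

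The key steps, in order, are as follows.

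First, check that $[\m_1,\m_1]\subset\h\oplus\m_1$. By \cite[Lem.~4.27]{Boh04}, $\m_0$ is a Lie subalgebra, the Lie algebra of a complement of $\H$ in $\N_\G(\H)$, so $[\m_0,\m_0]\subset\m_0=\m_1$.

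Second, check that $[\m_2,\m_2]\subset\h\oplus\m_2$. Since $\a$ acts trivially on $\m_2$, the Jacobi identity shows that $\a$ acts trivially on $[\m_2,\m_2]$. Hence the components of $[\m_2,\m_2]$ in the $\a$-nontrivial part $\m_0^{\rm rs}$ vanish. The component in $\a$ must also vanish: for $X,Y\in\m_2$ and $Z\in\m_1$, invariance gives $Q([X,Y],Z)=Q(X,[Y,Z])$. Moreover $[\m_1,\m_2]\subset\m_2$, since $\m_0$ normalizes $\h$ and so preserves its orthogonal complement $\m$ and the $\h$-isotypic structure; and $\m_1$ commutes with $\m_2$ on the $\a$ part. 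So we need $[\a,\m_2]=0$, which holds by definition of $\m_0^\a$, and this kills the $\a$-component. For $\m_0^{\rm rs}$ one uses $Q([X,Y],Z)=-Q(Y,[X,Z])$ together with $[Z,X]\in\m_2$ and the fact that the bracket is $\a$-equivariant. Pairing with root vectors of nonzero $\a$-weight against $\a$-invariant vectors then gives zero.

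Third, Lemma~\ref{lem:BK_Product} now gives $\G/\H=\G_1/\H_1\times\G_2/\H_2$ with $\h_1=[\m_1,\m_1]_\h$ and $\h_2=[\m_2,\m_2]_\h$. Since $[\m_0,\m_0]\subset\m_0$ we get $\h_1=0$, so $\g_1=\m_0$ and $\h_2=\h$. Here $\g_1=\m_0$ must be semisimple, because $\g=\g_1\oplus\g_2$ is semisimple and is an ideal decomposition. In particular $\z(\m_0)=0$ and $\a=\a_0$ is a Cartan subalgebra of $\g_1$, so $\rank\A=\rank\G_1$. Quotienting by $\A\subset\G_1$ then yields $\G/\H\A=\G_1/\A\times\G_2/\H$, after passing to the appropriate connected subgroups; simple connectivity of $\G/\H$ lets us take $\G=\G_1\times\G_2$.

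The main obstacle is the second step: verifying rigorously that $[\m_0^\a,\m_0^\a]$ has no component in $\a\oplus\m_0^{\rm rs}$. I would first try to settle it via the invariance of $Q$ and $\a$-weight arguments. If that stalls, the fallback is to show directly that $\m_0^\a\oplus\h$ is an ideal complementary to $\m_0$. One would argue that $[\m_0,\m_0^\a]=0$: it lies in the $\h$-nontrivial part and is $\a$-equivariant, and since $\m_0$ is generated by $\a$ and its root spaces, weight considerations force this bracket to vanish. Then $\m_0^\a\oplus\h$ is the centralizer of the ideal $\m_0$, which is an ideal in the semisimple $\g$.
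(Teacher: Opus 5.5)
Your argument is correct and uses the same ingredients as the paper: Lemma~\ref{lem:BK_Product}, together with the Jacobi identity to kill the $\m_0^{\rm rs}$-component of $[\m_0^\a,\m_0^\a]$. The difference is which space you split.

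The paper applies Lemma~\ref{lem:BK_Product} to $\G/\H\A$, with isotropy algebra $\h\oplus\a$ and $\n=\m_0^{\rm rs}\oplus\m_0^\a$. There an $\a$-component of $[\m_0^\a,\m_0^\a]$ is harmless. However, the isotropy factors must then be identified by two separate effectiveness arguments, namely $[\m_0^{\rm rs},\m_0^{\rm rs}]_{\h\oplus\a}=\a$ and $[\m_0^\a,\m_0^\a]_{\h\oplus\a}=\h$. In return, the splitting of $\G/\H\A$ comes out directly.

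You apply the lemma to $\G/\H$ with $\m_1=\m_0=\a\oplus\m_0^{\rm rs}$ and $\m_2=\m_0^\a$. This uses the almost effectiveness hypothesis exactly as stated. Moreover, $\h_1=[\m_0,\m_0]_\h=0$ is immediate because $\m_0$ is a subalgebra, and $\h_2=\h$ is delivered by the lemma itself. The cost is that you must check that $[\m_0^\a,\m_0^\a]$ has no $\a$-component, and then descend to $\G/\H\A$ by quotienting by $\A\subset\G_1$; both steps are cheap.

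Two small remarks. First, the rank claim does not need $\z(\m_0)=0$, since $\a$ is maximal abelian in $\m_0=\g_1$ by construction. Second, your announced ``main obstacle'' is already settled in two lines: the Jacobi identity handles the $\m_0^{\rm rs}$-component, and $Q([X,Y],Z)=Q(X,[Y,Z])=0$ for $X,Y\in\m_0^\a$, $Z\in\a$ (because $[\a,\m_0^\a]=0$) handles the $\a$-component. So the discussion of $[\m_1,\m_2]\subset\m_2$, the extra pairing argument against root vectors, and the fallback are redundant, though not wrong.
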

\begin{proof}
  Since $\m(\h,\a)=0$, from equation~\eqref{eqn:rough_decom}, we obtain the following $\Ad(\H\A)$-invariant decomposition of $\g$:
  \begin{equation}\label{eqn:rough_decomp1}
    \g=\h\oplus \a \oplus \m_0^{\rm rs}\oplus \m_0^\a.
  \end{equation}
  By Lemma~\ref{lem:BK_Product}, it suffices to prove that
  \begin{equation}\label{eqn:root_prod}
    [\m_0^{\rm rs}, \m_0^{\rm rs}]_{\h\oplus \a}=\a\quad \text{and}\quad [\m_0^{\rm rs}, \m_0^{\rm rs}]\subseteq \h \oplus \a \oplus \m_0^{\rm rs},
  \end{equation}
  and that
  \begin{equation}\label{eqn:non_root_prod}
    [\m_0^\a, \m_0^\a]_{\h\oplus \a}=\h\quad \text{and}\quad [\m_0^\a, \m_0^\a]\subseteq \h\oplus\a \oplus\m_0^\a.
  \end{equation}
  Since $\m_0^{\rm rs}$ is the direct sum of the root spaces of $\a_0$, the second relation of~\eqref{eqn:root_prod} follows immediately, as does $[\m_0^{\rm rs}, \m_0^{\rm rs}]_{\h\oplus \a}\subseteq \a$. To prove $[\m_0^{\rm rs}, \m_0^{\rm rs}]_{\h\oplus \a} = \a$, let $\a_1=[\m_0^{\rm rs}, \m_0^{\rm rs}]_{\h\oplus \a}$ and let $\a_1^\perp$ denote its $Q$-orthogonal complement in $\a$. Then by~\eqref{eqn:rough_decomp1} we have
  \begin{align}
    [\g,\a^\perp_1] &= [\h,\a^\perp_1]+[\a,\a^\perp_1]+[ \m_0^{\rm rs},\a^\perp_1]+[ \m_0^\a,\a^\perp_1] = [ \m_0^{\rm rs},\a^\perp_1] = 0.
  \end{align}
  Since $\G/\H$ is assumed to be almost effective, this implies that $\a_1^\perp=0$. This proves $[\m_0^{\rm rs}, \m_0^{\rm rs}]_{\h\oplus \a}=\a$, which completes the proof of~\eqref{eqn:root_prod}.\smallskip

  Now, observe that the Jacobi identity implies $[\a, [\m_0^\a,\m_0^\a]]=0$. Since $\a_0 \subset \a$ acts non-trivially on the sum of root spaces $\m_0^{\rm rs}$, this implies that $[\m_0^\a,\m_0^\a]_{\m_0^{\rm rs}} = 0$. We conclude that $[\m_0^\a, \m_0^\a]\subseteq \h\oplus\a\oplus \m_0^\a$, which is the second relation of~\eqref{eqn:non_root_prod}.

  Next, note that since $[\a,\m_0^\a]=0$, we have $ [\m_0^\a, \m_0^\a]_{\h\oplus \a}\subseteq\h$. Define $\h_1=[\m_0^\a, \m_0^\a]_{\h\oplus \a}$. Then we have $[\m_0^\a, \m_0^\a]\subseteq \h_1\oplus \m_0^\a$. Let $\h_1^\perp$ denote the $Q$-orthogonal complement of $\h_1$ in $\h$; we claim that $\h_1^\perp = \{0\}$. Indeed, if $X \in \h_1^\perp$ and $Y_1, Y_2 \in \m_0^{\a}$, then by definition we have $Q([Y_1,Y_2], X) = 0$. By skew-symmetry of $\operatorname{ad}$ with respect to the bi-invariant metric $Q$, this yields $Q([X,Y_1], Y_2) = 0$. Thus $X$ acts trivially on $\m_0^{\a}$. Since $X$ acts trivially on $\a \oplus \m_0^{\rm rs} = \m_0$, by~\eqref{eqn:rough_decomp1} we then see that $X$ acts trivially on $\m$. This contradiction to the almost effective action of $\G$ shows that $\h_1^\perp$ must be trivial. This completes the proof of~\eqref{eqn:non_root_prod}.\smallskip

  As noted above, by Lemma~\ref{lem:BK_Product}, the claimed splitting of $\G/\H\A$ follows.
\end{proof}

Next, we consider the case where $\m(\h,\a)\neq 0$ and obtain uniform estimates for the non-zero bracket constants, cf.\ Proposition~\ref{prop:bracconst}.
\begin{lemma}\label{lem:E1-nonzero-coeff}
  Let $\G, \H, \A$ be such that either $\h = 0$ or $\m(\h,\a)\neq 0$. Then, there exist constants $\Lambda\geq \delta>0$ such that the following holds. For each decomposition $\tilde\varphi \in \F^{\G/\H\A} \times V_\ell(\a)$, and for each $r+1 \leq i \leq r+\ell$, we have
  \begin{enumerate}
    \item\label{item:E1-nonzero-coeff_i} $[ikk]_{\tilde\varphi} \leq \Lambda$ for all $1\leq k\leq r$,
    \item\label{item:E1-nonzero-coeff_ii} $[ijj]_{\tilde\varphi} \geq \delta$ for some $1\leq j \leq r$.
  \end{enumerate}
\end{lemma}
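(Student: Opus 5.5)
Both bounds will come from compactness of $\F^{\G/\H\A}\times V_\ell(\a)$ together with the continuity of bracket constants from Proposition~\ref{prop:BracketsCont}. Part~\eqref{item:E1-nonzero-coeff_i} is immediate. Each function $\tilde\varphi\mapsto[ikk]_{\tilde\varphi}$ is continuous on a compact set, and there are finitely many index pairs $(i,k)$. We therefore take $\Lambda$ to be the maximum of these finitely many maxima; it can be enlarged if necessary so that $\Lambda\ge\delta$. Alternatively, $[ikk]_{\tilde\varphi}\le\sum_{e_\alpha\in\n_i}\sum_{e_\beta}\|[e_\alpha,e_\beta]\|_Q^2\le \max_{\|X\|_Q=1}\|\operatorname{ad}X\|^2\dim\g$, which gives an explicit constant.

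For part~\eqref{item:E1-nonzero-coeff_ii} we consider, for each $r+1\le i\le r+\ell$, the function $F_i(\tilde\varphi):=\max_{1\le j\le r}[ijj]_{\tilde\varphi}$. It is continuous, and we set $\delta:=\min_i\min F_i$, which is attained by compactness. The whole task is to show that $F_i(\tilde\varphi)>0$ for every $\tilde\varphi$. Let $X$ be the unit vector spanning $\n_i\subset\t\subset\a$. By Proposition~\ref{prop:bracconst}, $[X,\n_j]\subset\n_j$, and $[X,\t]=0$, so
\[
\sum_{j=1}^r[ijj]_{\tilde\varphi}=\sum_{e_\beta\in\n}\|[X,e_\beta]_{\n}\|_Q^2 .
\]
Moreover, $[X,\n]\subset\n$, because $\a$ normalizes $\h\oplus\a$ and is $Q$-skew. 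It therefore suffices to show that $\operatorname{ad}X|_\n\neq0$ for every nonzero $X\in\a$. This step uses only the Lie algebra data and no compactness argument.

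We argue by contradiction and suppose that $[X,\n]=0$ for some $0\ne X\in\a$. Since $[X,\a]=0$ and $[X,\h]=0$ (because $\a\subset\m_0$ centralizes $\h$), $X$ is central in $\g$. As $\g$ is semisimple, this forces $X=0$, a contradiction. Note that this argument does not even use the hypothesis that $\h=0$ or $\m(\h,\a)\ne0$. I would still double-check where the paper needs that hypothesis. It may matter when $\h\neq0$ and $\m(\h,\a)=0$, since then Lemma~\ref{lem:product_space} shows that $\a$ acts only on the $\G_1/\A$ factor, and the intended sum over $j$ may be restricted to modules in $\m(\h,\a)$. If so, I would redo the nondegeneracy step with $\n$ replaced by the relevant summand, using that $\a$ acts faithfully on $\m(\h,\a)\oplus\m_0^{\rm rs}$. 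The main obstacle is then making sure that the $j$ produced by the argument lies in the range of indices where the bound is later needed; the continuity and compactness parts are routine.
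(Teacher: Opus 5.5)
Your proof is correct. For part~(b) it takes a different, and somewhat cleaner, route than the paper; part~(a) and the final compactness step are the same.

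The paper fixes $\tilde\varphi$ and splits according to $\a=\z(\m_0)\oplus\a_0$:
\begin{itemize}
\item if the line $\n_i$ has a nonzero $\a_0$-component, it acts nontrivially on a root space of $\m_0$;
\item if $\n_i\subset\z(\m_0)$, it commutes with $\h\oplus\m_0\oplus\m_0^\a$, so semisimplicity forces it to act nontrivially on some module inside $\m(\h,\a)$.
\end{itemize}
Pointwise positivity is then made uniform via Proposition~\ref{prop:BracketsCont} and compactness, exactly as in your argument.

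You instead use the identity $\sum_{j\le r}[ijj]_{\tilde\varphi}=\sum_{e_\beta\in\n}\|[X,e_\beta]\|_Q^2$ together with faithfulness of $\operatorname{ad}(\a)$ on $\n$. This removes the case distinction. It also shows that the hypothesis ``$\h=0$ or $\m(\h,\a)\neq0$'' plays no role in this lemma.

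Your suspicion about that hypothesis is justified, and the paper's own argument confirms it. An element of $\z(\m_0)$ acting trivially on $\m(\h,\a)$ is central in $\g$. Hence $\m(\h,\a)=0$ forces $\z(\m_0)=0$, and only the root-space case can occur. The paper's route additionally records where the good index $j$ lives, namely a root space of $\m_0$ or a module in $\m(\h,\a)$. The statement does not need this information, so the hedging in your last paragraph can simply be dropped.

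Your identity also gives an explicit constant that does not rely on continuity in $\tilde\varphi$. The sum depends only on the $Q$-unit vector $X\in\a$, so you may take
\[
\delta=\frac1r\min_{X\in\a,\ \|X\|_Q=1}\ \sum_{e_\beta\in\n}\|[X,e_\beta]\|_Q^2>0 .
\]
With this choice $\Lambda\geq\delta$ holds automatically, since $\delta\le\max_j[ijj]_{\tilde\varphi}\le\Lambda$.
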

\begin{proof}
  The proof of~\eqref{item:E1-nonzero-coeff_i} follows immediately by Proposition~\ref{prop:BracketsCont}, since the space $\F^{\G/\H\A} \times V_\ell(\a)$ is compact.\smallskip

  In order to prove~\eqref{item:E1-nonzero-coeff_ii}, fix a decomposition $(\n_1, \dots, \n_{r+\ell}) = \tilde\varphi \in \F^{\G/\H\A} \times V_\ell(\a)$. We claim that for each $r+1 \leq i \leq r+\ell$, there exists $1\leq j \leq r$ such that $[ijj]_{\tilde\varphi}>0$. For each such $i$, there are two cases to consider, either $Q(\n_i,\a_0)\neq 0$ or $\n_i \subset \z(\m_0)$. If $Q(\n_i,\a_0)\neq 0$, then $[ijj]_{\tilde\varphi}>0$ for some $\n_j\subset \m_0$ which is a root space corresponding to the Cartan subalgebra $\a_0 \subset \m_0$. If $\h = 0$, then since $\g$ is semisimple, $\n_i$ is necessarily contained in $\a_0$ for each $i$, hence the claim.

  If $\m(\h,\a) \neq 0$, we also need to consider the case of indices $i$ where $Q(\n_i,\a_0)= 0$. Accordingly, assume that $\n_i\subset \z(\m_0)$ and let $0\neq X_i \in \n_i$. We claim that there is $X\in \m(\h,\a)$ such that $[X_i,X]\neq 0$. Indeed, if $X_i \in \z(\m_0)$ acts trivially on $\m(\h,\a)$, then it must also act trivially on $\g$, which contradicts the fact that $\g$ is semisimple. Thus in this case as well, there exists some $\n_j\subset \m(\h,\a)$ with $[ijj]_{\tilde\varphi}>0$. This proves the claim in this case.

  The result then follows by Proposition~\ref{prop:BracketsCont}, and compactness of the space $\F^{\G/\H\A} \times V_\ell(\a)$.
\end{proof}

\renewcommand{\theenumi}{\roman{enumi}}\renewcommand{\labelenumi}{(\theenumi)}
\subsection{Invariant metrics on homogeneous torus bundles}\label{subsec:invariant_metrics}
In this subsection, we investigate $\G$-invariant metrics and their Ricci curvature on (locally) homogeneous spaces $\G/\H\tilde\A$. We recall some known facts about these metrics, specifically those for which the bundle~\eqref{eqn:homtorusbundle} is a Riemannian submersion.

\begin{definition}
  A $\G$-invariant metric $g$ on $\G/\H\tilde\A$ is called an \emph{$\A$-submersion metric} with respect to the bundle~\eqref{eqn:homtorusbundle} if it satisfies that
  \begin{enumerate}
    \item $\t$ and $\n$, i.e., the tangent spaces to fibre and base are $g$-orthogonal, and
    \item $\check g := g|_\n$ is $\Ad(\H\A)$-invariant.
  \end{enumerate}
\end{definition}

The terminology comes from the fact that an $\A$-submersion metric $g$ is invariant under the right action by $\A$ on $\G/\H\tilde\A$ and $\check g$ defines a $\G$-invariant metric on the base $\G/\H\A$ of the torus bundle, such that $\pi: (\G/\H\tilde\A, g) \to (\G/\H\A, \check g)$ is a Riemannian submersion. In this case, we can write
\[ g = \hat g + \check g, \]
where $\hat g$ is an $\Ad(\H\tilde\A)$-invariant inner product on $\t$, which is necessarily flat.\smallskip

The $\G$-invariant, right $\A$-invariant metrics on $\G/\H\tilde\A$ correspond precisely to the $\Ad(\H\A)$-invariant scalar products on $\tilde\n$. Therefore, we have the following characterization of these metrics. Although its proof is straightforward, we state it as a proposition for future reference.
\begin{proposition}\label{prop:G-invariant-right-invariant-Submersion}
  A $\G$-invariant metric $g$ on $\G/\H\tilde\A \in \K(\G,\H,\ell)$ is an $\A$-submersion metric if and only if there exists a decomposition $\tilde\varphi\in \F^{\G/\H\A} \times V_\ell(\a)$ as in~\eqref{eqn:isodecGHt} with respect to which $g$ is diagonal, that is,
  \begin{equation}\label{eqn:invmetricGHt}
    g = x_1\,Q|_{\n_1} + \cdots + x_{r+\ell}\,Q|_{\n_{r+\ell}}.
  \end{equation}
\end{proposition}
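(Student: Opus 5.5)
Both implications come from the remark just before the statement: the $\A$-submersion metrics are exactly the $\G$-invariant metrics that correspond to $\Ad(\H\A)$-invariant inner products on $\tilde\n=\t\oplus\n$. For the forward direction, take an $\A$-submersion metric $g$ and write $g=\hat g+\check g$. By definition $\check g$ is an $\Ad(\H\A)$-invariant inner product on $\n$. The tangent space of $\G/\H\A$ at the origin is $\n$, so $\check g$ is a $\G$-invariant metric on the base. Applying the diagonalization recalled in Section~\ref{sec:comphomsp} (equation~\eqref{eqn:invmetric}) to $\G/\H\A$ gives a decomposition $\varphi=(\n_1,\dots,\n_r)\in\F^{\G/\H\A}$ with $\check g=\sum_{k\le r}x_kQ|_{\n_k}$. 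On $\t$, the form $\hat g$ is just an inner product relative to $Q|_\t$. Diagonalizing the $Q$-symmetric endomorphism that represents it gives a $Q$-orthonormal eigenbasis $\hat\varphi\in V_\ell(\a)$ with $\rho(\hat\varphi)=\t$. Setting $\n_{r+i}$ to be the line spanned by the $i$-th frame vector and $x_{r+i}$ the corresponding eigenvalue puts $g$ in the form~\eqref{eqn:invmetricGHt} with respect to $\tilde\varphi=(\varphi,\hat\varphi)$. Since $\t\perp_g\n$, there are no mixed terms, so $g$ is diagonal.

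For the converse, assume $g$ has the form~\eqref{eqn:invmetricGHt} for some $\tilde\varphi=(\varphi,\hat\varphi)$ with $\rho(\hat\varphi)=\t$. Then $\n=\n_1\oplus\cdots\oplus\n_r$ and $\t=\n_{r+1}\oplus\cdots\oplus\n_{r+\ell}$ are $Q$-orthogonal, and $g$ is block diagonal with respect to the $\n_k$, so $\t$ and $\n$ are $g$-orthogonal. Moreover, $g|_\n=\sum_{k\le r}x_kQ|_{\n_k}$, and each $\n_k$ is $\Ad(\H\A)$-invariant because $\varphi\in\F^{\G/\H\A}$. Since $Q$ is $\Ad(\G)$-invariant, $g|_\n$ is $\Ad(\H\A)$-invariant. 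It remains to check that $g$ is a legitimate $\G$-invariant metric on $\G/\H\tilde\A$, i.e.\ $\Ad(\H\tilde\A)$-invariant on $\tilde\n$. This holds because $\H\tilde\A\subset\H\A$ and $\Ad(\H\A)$ acts trivially on $\t\subset\a$; the latter is because $\a$ is abelian and centralizes $\h$, as $\a\subset\m_0$.

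There is no real obstacle. The only point needing care is that $\hat g$ can be diagonalized by a $Q$-orthonormal frame of $\t$, which is just the spectral theorem, and that such a frame is exactly an element of $V_\ell(\a)$ lying over $\t$.
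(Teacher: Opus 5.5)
Your proof is correct and follows the argument the paper has in mind (the paper omits it as straightforward): an $\A$-submersion metric is the same as an $\Ad(\H\A)$-invariant inner product on $\tilde\n=\t\oplus\n$. Such a product is diagonalized by an $\Ad(\H\A)$-irreducible decomposition of $\n$ together with a $Q$-orthonormal eigenframe of $\t$. Diagonalizing $\check g$ and $\hat g$ separately, as you do, uses the orthogonality $\t\perp_g\n$ built into the definition, so you never need the fact that $\n$ contains no trivial $\Ad(\H\A)$-summand; your extra check of $\Ad(\H\tilde\A)$-invariance is harmless but is already part of the hypothesis.
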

Now we provide some details about the convergence of the sequence $(\G/\H\tilde\A_i,g_i)$ in Theorem~\ref{intthm:precompactness}.
\begin{remark}
\label{rem:convergence_Grassmannian_topology}
  Consider a sequence $(\G/\H\tilde\A_i,g_i)_{i\in\mathbb N}$, where each $\G/\H\tilde\A_i$ is in $\K(\G,\H,\ell)$ and $g_i$ is an $\A$-submersion metric. By Proposition~\ref{prop:G-invariant-right-invariant-Submersion} there is sequence $\tilde\varphi_i$ in $\F^{\G/\H}\times V_\ell(\a)$ and a sequence $x^{(i)}\in\R_+^{r+\ell}$ corresponding to the metrics $g_i$. By Proposition \ref{prop:Basis_diagonalize}, $\G/\H\tilde\A_i$ converges to some $\G/\H\tilde\A$ in the topology of $\K(\G,\H,\ell)$ and $\tilde\varphi_i$ converges in $\F^{\G/\H\A} \times V_\ell(\a)$ to $\tilde\varphi$, a decomposition for $\G/\H\tilde\A$. Assume $x^{(i)}$ converges to $x$ in $\R_+^{r+\ell}$. Let $g$ be the metric on $\G/\H\tilde\A$ whose coordinates (with respect to the limit decomposition $\tilde\varphi$) are given by the entries of $x$. By abuse of language, in this situation we will say that $(\G/\H\tilde\A_i,g_i)$ converges to $(\G/\H\tilde\A,g)$ in the Grassmannian topology.
\end{remark}
We now turn our attention to the Ricci tensor of $\A$-submersion metrics on $\G/\H\tilde\A$. By \cite[Lem.~3.5]{Ped19}, the fibres of~\eqref{eqn:homtorusbundle} are totally geodesic. Furthermore, under the $\Ad(\H\A)$-action, $\t$ is a trivial module while $\n$ is non-trivial. Hence, by $\Ad(\H\A)$-invariance of $g$, we have $\Ric_g(\n,\t) = 0$. As a result, the expressions in \cite[Prop.~9.36]{Bes08} simplify, and we have
\begin{align}
  \Ric_g(U,V) &= (AU, AV) \label{eqn:ricRiemSubmTotGeodFib-VV},\\
  \Ric_g(X,U) &= 0 \label{eqn:ricRiemSubmTotGeodFib-VH},\\
  \Ric_g(X,Y) &= \Ric_{\check g}(X,Y) - 2(A_X, A_Y) \label{eqn:ricRiemSubmTotGeodFib-HH},
\end{align}
for each $X, Y \in \n$, $U, V \in \t$, and
\begin{equation*}
  (AU, AV) := \sum_ig(A_{X_i}U, A_{X_i}V),\qquad (A_X, A_Y):= \sum_ig(A_XX_i, A_YX_i) = \sum_jg(A_XU_j, A_YU_j).
\end{equation*}
Here $\{U_j\}$ (resp. $\{X_i\}$) is a $g$-orthonormal basis for $\t$ (resp. $\n$), and $A$ denotes the O'Neill tensor, which measures how far the horizontal distribution $\n$ is from being integrable (see~\cite[Sec.~2]{ONei66} and \cite[Ch.~9.C]{Bes08} for more details). Also note that by \cite[Ch.~9]{Bes08}, we have
\begin{equation}\label{eqn:norm-A}
  |A|^2 = \sum_i (A_{X_i}, A_{X_i}) = \sum_j (A U_j, A U_j).
\end{equation}
%

\section{Uniform Estimates and Precompactness}\label{sec:closedness_proof}

The main goal of this section is to prove precompactness of homogeneous Einstein torus bundles. More precisely, we prove the following result, from which Theorem~\ref{intthm:precompactness} follows immediately.

\begin{theorem}\label{T:ThmA-more-general}
  Let $\G/\H\tilde\A_i\in \K(\G,\H,\ell)$ be a sequence of homogeneous spaces such that for each $i$, $\G/\H\tilde\A_i$ admits an $\A$-submersion Einstein metric $g_i$ with Einstein constant $1$. Then, up to subsequence, $\G/\H\tilde\A_i$ converges to a (locally) homogeneous space $\G/\H\tilde\A\in \K(\G,\H,\ell)$ and $g_i$ converges to an $\A$-submersion Einstein metric $g$ on $\G/\H\tilde\A$.
\end{theorem}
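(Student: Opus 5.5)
By Proposition~\ref{prop:G-invariant-right-invariant-Submersion}, each $g_i$ is diagonal with respect to some decomposition $\tilde\varphi_i=(\varphi_i,\hat\varphi_i)\in\F^{\G/\H\A}\times V_\ell(\a)$, with coefficients $x^{(i)}=(x^{(i)}_1,\dots,x^{(i)}_{r+\ell})\in\R_+^{r+\ell}$. By Proposition~\ref{prop:Basis_diagonalize}, after passing to a subsequence, $\G/\H\tilde\A_i\to\G/\H\tilde\A$ and $\tilde\varphi_i\to\tilde\varphi$, where $\tilde\varphi$ is a decomposition for the limit space. It therefore suffices to prove uniform two-sided bounds $0<c\le x^{(i)}_k\le C$ for all $i,k$. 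Then a further subsequence gives $x^{(i)}\to x\in\R_+^{r+\ell}$. Proposition~\ref{prop:RicInvMetricGZ} expresses the Ricci coefficients as rational functions of the $x_k$ with the bracket constants $[ijk]_{\tilde\varphi}$ as coefficients. The diagonal entries $\Ric(g)(X,X)$ for unit $X\in\n_k$ depend only on these constants, and the remaining terms $\sum_{e_\alpha\in\n_k}\|[X,e_\alpha]_\h\|^2$ are controlled similarly by the decomposition. The off-diagonal Ricci entries are continuous in bases of the modules as well. By Proposition~\ref{prop:BracketsCont} these quantities converge, so $\Ric(g_i)=g_i$ passes to the limit and $\Ric(g)=g$, with $g$ an $\A$-submersion metric by construction.

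For the bounds, first treat the degenerate case $\h\neq0$, $\m(\h,\a)=0$. By Lemma~\ref{lem:product_space}, $\G/\H\tilde\A_i=\G_1/\tilde\A_i\times\G_2/\H$ with $\A$ a maximal torus of $\G_1$, and the $\G_2/\H$ factor is fixed. Choosing the decomposition adapted to the product, the Einstein equation splits. On $\G_2/\H$ we use the compactness of Einstein metrics on a fixed space from \cite{BWZ04}. The $\G_1$ factor falls under the case $\h=0$ of Lemma~\ref{lem:E1-nonzero-coeff}. We may therefore assume the hypothesis of Lemma~\ref{lem:E1-nonzero-coeff} throughout.

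\emph{Base directions.} The metric $\check g_i=g_i|_\n$ is a $\G$-invariant metric on the fixed base $\G/\H\A$. By \eqref{eqn:ricRiemSubmTotGeodFib-HH}, $\Ric_{\check g_i}=\check g_i+2(A_\cdot,A_\cdot)\ge\check g_i$. In particular the scalar curvature of $\check g_i$ is at least $\dim\n$, while the Einstein condition together with homogeneity bounds curvature. I would bound $|\Rm_{g_i}|$ for the Einstein total space using the Gap theorem \cite{BLS19}, which gives a bound of the form $|\Rm|\le C(n)\cdot\text{scal}$ for homogeneous Einstein metrics. O'Neill's formulas then give bounded curvature and bounded $|A|$ on the base. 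If the $\check g_i$ were unbounded (some $x_k^{(i)}\to\infty$ after normalisation) or collapsing, the compactness result \cite{KPS26} for homogeneous metrics collapsing with bounded curvature would produce a limit on a lower-dimensional homogeneous space of the form $\G/\K$. Combined with $\Ric_{\check g}\ge\check g>0$ (which prevents escape to infinity via Bonnet--Myers diameter bounds) and the volume/scal normalisation, this should yield a contradiction, hence uniform bounds $c\le x^{(i)}_k\le C$ for $k\le r$. This step is the main obstacle. I expect to need a careful description of which degenerations are compatible with $\Ric\ge\check g$, so that every collapsed limit can be ruled out, and the upper bound on the $x_k$ must come from a diameter bound.

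\emph{Fibre directions.} For $U\in\n_i$ with $i>r$, \eqref{eqn:ricRiemSubmTotGeodFib-VV} and Proposition~\ref{prop:RicInvMetricGZ} give the Einstein equation
\[
  1=\frac{\Ric(U,U)}{g(U,U)}=\sum_{k\le r}\frac{x_i}{4x_k^2}\,\frac{[ikk]_{\tilde\varphi}}{d_i}.
\]
The sign here is $+$, since the fibre is abelian. Using the base bounds, Lemma~\ref{lem:E1-nonzero-coeff}\eqref{item:E1-nonzero-coeff_i} gives $1\le x_i\,\ell\Lambda/(4c^2)$, a lower bound on $x_i$. Lemma~\ref{lem:E1-nonzero-coeff}\eqref{item:E1-nonzero-coeff_ii} gives $1\ge x_i\delta/(4C^2)$, an upper bound on $x_i$. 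Both bounds are uniform in $i$, which completes the argument.
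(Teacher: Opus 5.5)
Much of your outline matches the paper. You diagonalize via Proposition~\ref{prop:G-invariant-right-invariant-Submersion}, extract convergent spaces and decompositions via Proposition~\ref{prop:Basis_diagonalize}, and pass the Einstein equation to the limit by continuity. You split off $\G_2/\H$ via Lemma~\ref{lem:product_space}; using \cite{BWZ04} there is an acceptable substitute for the paper's Proposition~\ref{prop:Compactness_Fixed_Space} with $\rank\G_2=\rank\H$. Your fibre bounds come from the vertical Ricci identity and Lemma~\ref{lem:E1-nonzero-coeff}, exactly as in the paper; note the constant in your lower bound should be $r\Lambda$, not $\ell\Lambda$. Your curvature bound on the base is also valid: you use the Gap theorem on the Einstein total space, plus $K_{\check g}(X,Y)=K_g(X,Y)+3|A_XY|^2$ and $|A|^2=\sum_j(AU_j,AU_j)=\ell$. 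The paper instead applies the Gap theorem on the base, using $\|\Ric_{\check g}\|_{\check g}\le\mathrm{tr}\,\Ric_{\check g}=\dim\n+2\ell$, which holds because $\Ric_{\check g}\ge\check g>0$.

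The gap is the step you flag yourself: the two-sided bounds on the base coefficients $x_1,\dots,x_r$ are never proved, and both the fibre bounds and the whole theorem rest on them. The missing idea is the rank condition. Theorem~\ref{Thm:Toral_Direction} (\cite{KPS26}) bounds all coefficients of any $\G$-invariant metric on $\G/\mathsf{K}$ with $|\sec|\le C$ below by some $\varepsilon(C)$, in any diagonalizing decomposition, provided $\rank\mathsf{K}=\rank\N_\G(\mathsf{K})$. For $\mathsf{K}=\H\A$ this holds because $\A$ is a maximal torus in $\N_\G(\H)/\H$. So once curvature is bounded, the lower bound is immediate. No analysis of collapsed limits, or of which degenerations are compatible with $\Ric_{\check g}\ge\check g$, is needed; this rank condition is exactly what excludes the collapsed limits your contradiction argument would have to rule out. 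For the upper bound, a diameter bound alone does not control the coefficients: Berger spheres with stretched Hopf fibre have uniformly bounded diameter. What works is Bishop--Gromov. The inequality $\Ric_{\check g}\ge\check g$ bounds $\vol_{\check g}(\G/\H\A)=\vol_Q(\G/\H\A)\prod_k x_k^{d_k}$ from above, and together with the lower bounds this bounds each $x_k$ from above.
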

We now work towards proving Theorem~\ref{T:ThmA-more-general}. Recall from Proposition~\ref{prop:G-invariant-right-invariant-Submersion} that, for each $i$, the metrics $g_i$ appearing in Theorem~\ref{T:ThmA-more-general} can be diagonalized with respect to decompositions $\tilde\varphi_i \in \F^{\G/\H\A} \times V_{\ell}(\a)$. To prove the theorem, we will establish a series of uniform estimates on the eigenvalues of $\A$-submersion Einstein metrics diagonalized with respect to such decompositions. We begin by recalling the following theorem which can be extracted from the proof of \cite[Thm.~3.1]{KPS26} (cf.~\cite[Thm.~A]{Ped19}) and reformulated for our purposes.
\begin{theorem}\cite[Thm.~3.1]{KPS26} \label{Thm:Toral_Direction}
  \renewcommand{\K}{\mathsf{K}}
  Let $\G/\K$ be a compact connected homogeneous space with $\rank \K=\rank \N_\G(\K)$. Then, for every $C>0$, there exists a constant $\varepsilon>0$ such that the following holds. For any $\G$-invariant metric $g$ on $\G/\K$ with $|\sec_g|\leq C$, the metric coefficients of $g$ with respect to any diagonalizing decomposition are bounded below by~$\varepsilon$.
\end{theorem}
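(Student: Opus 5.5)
The plan is to argue by contradiction and to combine a compactness argument in $\F^{\G/\K}$ with the structure theory of collapse with bounded curvature for homogeneous metrics. Suppose the statement fails for some $C>0$. Then there are $\G$-invariant metrics $g_n$ on $\G/\K$ with $|\sec_{g_n}|\le C$, diagonalizing decompositions $\varphi_n=(\m_1^{(n)},\dots,\m_q^{(n)})\in\F^{\G/\K}$, and eigenvalues $x^{(n)}=(x^{(n)}_1,\dots,x^{(n)}_q)$, such that $\min_k x_k^{(n)}\to 0$.

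\textbf{Step 1: extract a limit.} Since $\F^{\G/\K}$ is compact (\cite[Sec.~4.3]{Boh04}), we pass to a subsequence with $\varphi_n\to\varphi=(\m_1,\dots,\m_q)$. By continuity of the bracket constants, all Lie-algebraic data in the curvature formula also converge. After relabelling and a further subsequence, we split the indices into $S=\{k: x_k^{(n)}\to 0\}$, which is nonempty, and its complement. Along $S$ we also order the indices by their rates of decay, i.e.\ by the limits of the ratios $x_k^{(n)}/x_j^{(n)}\in[0,\infty]$.

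\textbf{Step 2: the collapsing directions generate a subalgebra normalizing $\k$.} Write $\m_S=\bigoplus_{k\in S}\m_k$. We test the curvature bound on $g_n$-unit vectors $X/\sqrt{x_k}$ and $Y/\sqrt{x_j}$ with $X\in\m_k$, $Y\in\m_j$, using the standard formula for the sectional curvature of a diagonal homogeneous metric, in the spirit of Proposition~\ref{prop:RicInvMetricGZ}. This formula contains the term $|[X,Y]_\k|^2_Q/(x_kx_j)$ with a positive coefficient. It also contains terms of the form $x_s|[X,Y]_{\m_s}|^2/(x_kx_j)$, together with the compensating $(x_k-x_j)^2$-type terms. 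The aim is to deduce the following from $|\sec_{g_n}|\le C$:
\begin{enumerate}
\item $[\m_S,\m_S]\subset \k\oplus\m_S$ in the limit. Hence $\k'=\k\oplus\m_S$ is a subalgebra.
\item $[\k'$-directions$,\m_S]$ has vanishing $\k$-component in the appropriate sense. More precisely, the collapsing fibres $\K'/\K$ become asymptotically flat, which forces $\k'/\k$ to be a nilpotent, hence abelian, Lie algebra, with $\k$ an ideal of $\k'$.
\end{enumerate}
This is exactly the mechanism of \cite[Thm.~A]{Ped19}: bounded curvature collapse of homogeneous spaces happens along orbits of a group $\K'\supset\K$ for which $\K'/\K$ is a torus.

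\textbf{Step 3: contradiction with the rank condition.} From Step 2, $\k'\subset\n_\g(\k)$. Therefore $\K'_0\subset\N_\G(\K)$, and $\K'/\K$ contains a nontrivial torus in $\N_\G(\K)/\K$. However, $\rank\K=\rank\N_\G(\K)$ implies that $\N_\G(\K)_0/\K$ has rank zero, hence is trivial, so no such torus exists. This contradiction gives the uniform lower bound $\varepsilon$.

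\textbf{Main obstacle.} Step 2 is the hard part: different modules in $S$ may shrink at different rates, and $\m_S$ need not be irreducible-compatible in the limit. I would handle this by an induction on the fastest-decaying scale. First I show that the fastest-shrinking block together with $\k$ forms a subalgebra whose quotient is abelian. Then I rescale and pass to the next scale, as in the nested collapse argument of \cite{KPS26}. The other delicate point is bounding mixed terms such as $x_s/(x_kx_j)$ when $s\notin S$. I would try to control these first via the $\k$-component term, which cannot be cancelled by any other term of the formula.
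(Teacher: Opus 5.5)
There is a genuine gap, and it is exactly your Step~2. That step is not a reduction; it is the whole statement.

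Since $\mathfrak{n}_{\mathfrak g}(\mathfrak{k})=\mathfrak{k}\oplus\m_0$ is a sum of commuting ideals, the hypothesis $\rank\mathsf{K}=\rank\N_\G(\mathsf{K})$ is equivalent to $\m_0=0$. In other words, every summand $\m_k$ of every decomposition is a non-trivial $\Ad(\mathsf{K})$-module. Your Step~3 is fine, but it only uses $[\mathfrak{k},\m_S]=0$ for a non-zero limit summand. So Step~2 must show precisely that a non-trivial $\Ad(\mathsf{K})$-summand cannot collapse under a two-sided curvature bound, and nothing in your sketch does this:
\begin{itemize}
\item The phrase ``$\mathfrak{k}'/\mathfrak{k}$ nilpotent, with $\mathfrak{k}$ an ideal'' presupposes this conclusion.
\item ``Asymptotically flat fibres'' is unjustified. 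By the Gauss equation, the intrinsic curvature of the orbits of $\exp(\mathfrak{k}\oplus\m_S)$ involves second fundamental form terms with unbounded ratios $x_s/x_k$. Passing from almost-flatness to $[\mathfrak{k},\m_S]=0$ would itself be a structure theorem.
\end{itemize}

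The local mechanism you propose also fails. For $Q$-orthonormal $X,Y\in\m_k$, the standard curvature formula for invariant metrics \cite[Ch.~7]{Bes08} gives
\[
  \sec_g(X,Y)=\frac{1}{x_k}\Big(\|[X,Y]_{\mathfrak{k}}\|_Q^2+\sum_s\Big(1-\frac{3x_s}{4x_k}\Big)\|[X,Y]_{\m_s}\|_Q^2\Big).
\]
The $\mathfrak{k}$-term is therefore of the same order as the $\m_s$-terms with $x_s/x_k$ bounded, and those are negative once $x_s/x_k>4/3$. So the $\mathfrak{k}$-term can be cancelled, contrary to your claim.

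A concrete case satisfying the rank hypothesis is $\SU(2)^3/\Delta\SU(2)$ with $\m_1=\{(X,-X,0)\}$ and $\m_2=\{(X,X,-2X)\}$, $X\in\mathfrak{su}(2)$.
\begin{itemize}
\item One checks $[\m_1,\m_1]\subset\mathfrak{k}\oplus\m_2$ with $\|[X,Y]_{\mathfrak{k}}\|_Q^2=2\|[X,Y]_{\m_2}\|_Q^2$.
\item For $g_t=t\,(Q|_{\m_1}+4Q|_{\m_2})$, every plane in $\m_1$ is therefore flat for all $t$, although $[\m_1,\m_1]_{\mathfrak{k}}\neq0$.
\item The curvature of $g_t$ does blow up as $t\to0$, but only on mixed planes $X\in\m_k$, $Y\in\m_j$. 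There the positive term $\frac{(x_j-x_k)^2}{4x_sx_jx_k}\|[X,Y]_{\m_s}\|_Q^2$ enters; here it gives $\sec_{g_t}=\|[X',Y']\|_Q^2/(6t)$ for the underlying $q$-unit $X',Y'\in\mathfrak{su}(2)$.
\end{itemize}
Organising such mixed-plane estimates over all possible relative rates is the real content of the theorem. Your ``induction on scales'' does not say how to do this.

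Nor can you close Step~2 by quoting \cite[Thm.~A]{Ped19}. That result concerns unit-volume sequences, whereas here there is no volume normalisation and some $x_s^{(n)}$ may diverge. If $\vol(g_n)\to\infty$, rescaling to unit volume destroys the curvature bound. The paper itself gives no proof of this statement; it extracts it from the proof of \cite[Thm.~3.1]{KPS26}. Your outline has a sensible skeleton consistent with the cited results: compactness of $\F^{\G/\mathsf{K}}$, collapse only along directions commuting with $\mathfrak{k}$, and the rank condition excluding such directions. As written, though, it only establishes the routine Steps~1 and~3. The decisive step must either be proved in full, which requires the mixed-plane analysis above, or be cited.
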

Now we generalize this theorem, by assuming bounded Ricci curvature instead of bounded sectional curvature.
\begin{proposition}\label{prop:Compactness_Fixed_Space}
  \renewcommand{\K}{\mathsf{K}}
  Let $\G/\K$ be a compact connected homogeneous space with $\rank\K=\rank\N_\G(\K)$. Then for every $L>1$, there exists a constant $C_0>0$ such that the following holds. For any $\G$-invariant metric $g$ on $\G/\K$ with $\|\Ric_g\|_g\leq L$ and $\Ric_g \geq 1$, the metric coefficients of $g$ with respect to any diagonalizing decomposition are bounded from below by $C_0^{-1}$ and from above by $C_0$.
\end{proposition}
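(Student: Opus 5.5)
The plan is to derive the lower bound from Theorem~\ref{Thm:Toral_Direction} after upgrading the Ricci bound to a sectional curvature bound. The upper bound will then follow from a volume argument.

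\textbf{Step 1: from Ricci to sectional curvature.} I would invoke the Gap theorem / optimal curvature estimate of \cite{BLS19}. For $\G$-invariant metrics on a homogeneous space of dimension $n$ it gives a dimensional constant $C(n)$ with $\|\Rm_g\|_g\le C(n)\|\Ric_g\|_g$. The hypothesis $\|\Ric_g\|_g\le L$ then yields $|\sec_g|\le C(n)L=:C$. This is the step where I am least certain the cited result applies verbatim. If it needs the metric to be non-flat, that holds here because $\Ric_g\ge1$. If it is stated for homogeneous spaces $\G/\K$ with $\G$ not necessarily acting effectively, we can pass to the effective quotient, which changes neither the metric nor the diagonalizing decompositions. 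With $C$ fixed, Theorem~\ref{Thm:Toral_Direction} applies, because $\rank\K=\rank\N_\G(\K)$ by assumption. It gives $\varepsilon=\varepsilon(C)>0$ such that every metric coefficient $x_i$ with respect to any diagonalizing decomposition satisfies $x_i\ge\varepsilon$.

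\textbf{Step 2: upper bound via volume.} Since $\Ric_g\ge 1=(n-1)\cdot\frac1{n-1}$, the Bishop--Gromov comparison theorem bounds the volume: $\vol_g(\G/\K)\le \vol(S^n_{\sqrt{n-1}})=:V_n$. On the other hand, if $g=\sum_i x_i Q|_{\m_i}$ with $d_i=\dim\m_i$, then
\[
  \vol_g(\G/\K)=\Big(\prod_{i=1}^q x_i^{d_i/2}\Big)\vol_Q(\G/\K),
\]
where $\vol_Q(\G/\K)>0$ depends only on $\G/\K$ and $Q$. Combining this with $x_j\ge\varepsilon$ for all $j\neq i$, I obtain
\[
  x_i^{d_i/2}\le \frac{V_n}{\vol_Q(\G/\K)}\,\varepsilon^{-\sum_{j\ne i}d_j/2}.
\]
This is a uniform upper bound independent of the decomposition. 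Taking $C_0$ to be the maximum of $\varepsilon^{-1}$ and these upper bounds (over the finitely many possible values of $i$ and $d_i$) proves the proposition.

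\textbf{Expected obstacle.} The only nontrivial input is Step~1, namely a sectional curvature bound in terms of $\|\Ric_g\|$ alone, uniform over all invariant metrics. This is exactly what the homogeneous Gap theorem supplies. Without it, one would have to derive the bound by hand from the bracket-constant formula of Proposition~\ref{prop:RicInvMetricGZ}, which I would avoid.
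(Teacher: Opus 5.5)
Your proposal is correct and matches the paper's proof step for step. The paper also uses the Gap theorem of \cite{BLS19} to turn $\|\Ric_g\|_g\le L$ into $|\sec_g|\le CL$, then gets the uniform lower bound from Theorem~\ref{Thm:Toral_Direction}, and gets the upper bound by combining the Bishop--Gromov volume bound with $\vol_g(\G/\mathsf{K})=\vol_Q(\G/\mathsf{K})\prod_i x_i^{d_i/2}$. Your exponent $d_i/2$ is the correct one; the paper writes $x_i^{d_i}$, which does not affect the argument.
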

\begin{proof}
  \renewcommand{\K}{\mathsf{K}}
  Let $\varphi\in \F^{\G/\K}$ be a diagonalizing decomposition for $g$, with respect to which $g=(x_1,\dots,x_q)$. The Gap Theorem \cite[Thm.~4 and the subsequent paragraph]{BLS19} applied to $\G/\K$ implies the existence of a constant $C = C (\dim (\G / \K))$ such that $\|\Rm_g\|_g \leq C\, \|\Ric_g\|_g$. For any pair of $g$-orthonormal vectors $\{X,Y\}$ in the tangent space to $\G/\K$ at the origin, we conclude
  \begin{equation}\label{eqn:gap2}
    |\sec_g(X, Y)| \leq \|\Rm_g\|_g \leq C\, \|\Ric_g\|_g \leq CL.
  \end{equation}
  Then, since $\rank \K=\rank \N_\G(\K)$, the lower bound on the metric coefficients follows from Theorem~\ref{Thm:Toral_Direction}.\smallskip

  Since $\Ric_g\geq 1$, it follows from the Bishop--Gromov theorem that there exists a constant $\tilde C$ such that $\vol_g(\G/\K)\leq \tilde C$. Since $\vol_g(\G/\K) = \vol_Q(\G/\K)\cdot \Pi_{i=1}^q x_i^{d_i}$, and the coefficients $x_i$ are uniformly bounded from below, the upper bound follows.
\end{proof}

As a consequence, we obtain uniform bounds on the metric eigenvalues in horizontal directions for an $\A$-submersion Einstein manifold in $\K(\G,\H,\ell)$.
\begin{corollary}\label{cor:base_bounded}
  There exists a constant $C_1>0$ such that for any homogeneous space $M\in \K(\G,\H,\ell)$ and any $\A$-submersion Einstein metric $g$ on $M$ with Einstein constant $1$, of the form~\eqref{eqn:invmetricGHt}, we have
  \[ C_1^{-1} \leq x_i \leq C_1, \quad \text{ for each } 1\leq i \leq r. \]
\end{corollary}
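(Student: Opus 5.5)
The plan is to apply Proposition~\ref{prop:Compactness_Fixed_Space} to the base $\G/\K$ with $\K=\H\A$, equipped with the submersed metric $\check g = g|_\n = x_1Q|_{\n_1}+\cdots+x_rQ|_{\n_r}$. The decomposition $\varphi=(\n_1,\dots,\n_r)\in\F^{\G/\H\A}$ diagonalizes $\check g$, so a uniform two-sided bound on its coefficients is exactly the claim. Two things must be checked: the rank hypothesis, and the hypotheses $\Ric_{\check g}\ge 1$ and $\|\Ric_{\check g}\|_{\check g}\le L$ with $L$ independent of $M$ and $g$.

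For the rank condition I would argue that $\rank \H\A = \rank \N_\G(\H\A)$. The Lie algebra of $\N_\G(\H\A)$ is contained in $\h\oplus\m_0$. Since $\a$ is maximal abelian in $\m_0$ and $\h$ commutes with $\m_0$, the algebra $\h\oplus\a$ already contains a maximal torus of $\h\oplus\m_0$, and hence of $\N_\G(\H\A)$. Connectedness of $\G/\H\A$ is clear since $\G/\H$ is simply connected.

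For the curvature bounds I would use the O'Neill formulas \eqref{eqn:ricRiemSubmTotGeodFib-VV}--\eqref{eqn:ricRiemSubmTotGeodFib-HH}. On horizontal $X$, the Einstein condition $\Ric_g=g$ gives
\[ \Ric_{\check g}(X,X) = \check g(X,X) + 2(A_X,A_X) \ge \check g(X,X), \]
so $\Ric_{\check g}\ge 1$. For the upper bound, summing over a $g$-orthonormal basis and using \eqref{eqn:norm-A} gives
\[ \sum_i 2(A_{X_i},A_{X_i}) = 2|A|^2 = 2\sum_j (AU_j,AU_j) = 2\sum_j \Ric_g(U_j,U_j) = 2\ell. \]
Since each $(A_X,A_X)\ge 0$, the operator $\Ric_{\check g}-\check g$ is positive semidefinite with trace $2\ell$. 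Therefore $\|\Ric_{\check g}\|_{\check g}\le \sqrt{\dim\n}+2\ell=:L$, a bound depending only on $\G,\H,\ell$. This trace identity is the key step and the only place the Einstein condition on the total space is used substantively. Proposition~\ref{prop:Compactness_Fixed_Space} then yields $C_0$ depending only on $\G/\H\A$ and $L$, and I take $C_1=C_0$.

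The main obstacle I expect is the rank condition, where one must justify that the normalizer of $\H\A$ has no larger maximal torus. If this fails in some degenerate case, I would fall back on the splitting of Lemma~\ref{lem:product_space}.
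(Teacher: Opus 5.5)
Your proposal is correct and follows the paper's route. You apply Proposition~\ref{prop:Compactness_Fixed_Space} to $(\G/\H\A,\check g)$, get $\Ric_{\check g}\ge 1$ from \eqref{eqn:ricRiemSubmTotGeodFib-HH}, bound $\|\Ric_{\check g}\|_{\check g}$ uniformly through the identity $\sum_i(A_{X_i},A_{X_i})=|A|^2=\sum_j(AU_j,AU_j)=\ell$ from \eqref{eqn:norm-A}, and use maximality of $\A$ for $\rank\H\A=\rank\N_\G(\H\A)$. Your rank check is more explicit than the paper's one-line assertion and is valid, since $\h$ is the derived algebra of $\h\oplus\a$, so the normalizer of $\h\oplus\a$ lies in $\h\oplus\m_0$; hence no fallback to Lemma~\ref{lem:product_space} is needed.
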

\begin{proof}
  Let $\{X_i\}$ be a $g$-orthonormal basis for $\n$, and $\{U_j\}$ a $g$-orthonormal basis for $\t$. Then, since by assumption, $g$ is Einstein with Einstein constant $1$, we have
  \[ \Ric_g(X_i,X_i)=1, \quad\text{and}\quad (A U_j, A U_j)=\Ric(U_j, U_j)=1, \]
  where the latter follows from equation~\eqref{eqn:ricRiemSubmTotGeodFib-VV}. It follows from equation~\eqref{eqn:ricRiemSubmTotGeodFib-HH} that $\Ric_{\check g}(X_i, X_i)\geq 1$. Indeed,
  \[ \Ric_{\check g}(X_i, X_i)=\Ric_g(X_i,X_i) + 2(A_{X_i}, A_{X_i})\geq 1, \]
  since $(A_{X_i}, A_{X_i})\geq 0$. Now, we obtain
  \begin{align*}
    \|\Ric_{\check g}\|_{\check g}
    \leq \sum_{i=1}^r \Ric_{\check g}(X_i, X_i)
    = \sum_{i=1}^r (\Ric_g(X_i,X_i) + 2(A_{X_i}, A_{X_i}))
    = \sum_{i=1}^r \Ric_g(X_i,X_i) + 2\sum_{j=r+1}^{r+\ell}(A U_j, A U_j)
    = r + 2\ell,
  \end{align*}
  where the second last equality, uses equation~\eqref{eqn:norm-A}. Since $\A$ is a maximal torus in $\N_\G(\H)/\H$, we have $\rank \H\A = \rank \N_\G(\H\A)$, and so the result follows by Proposition~\ref{prop:Compactness_Fixed_Space} applied to the homogeneous space $(\G/\H\A, \check g)$.
\end{proof}

The following result demonstrates that the metric eigenvalues of the fibre are uniformly bounded both from above and away from zero.
\begin{proposition}\label{prop:fiber_bounded}
  Let $\G,\H,\A$ be such that either $\h=0$ or $\m(\h,\a)\neq 0$. There exists a constant $C_2>0$ such that for any homogeneous space $M\in \K(\G,\H,\ell)$ and any $\A$-submersion Einstein metric $g$ on $M$ with Einstein constant $1$, of the form~\eqref{eqn:invmetricGHt}, we have
  \[ C_2^{-1} \leq x_i \leq C_2, \quad \text{ for each } r+1\leq i \leq r+\ell. \]
\end{proposition}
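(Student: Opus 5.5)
Our plan is to use the vertical Einstein equation \eqref{eqn:ricRiemSubmTotGeodFib-VV}, $\Ric_g(U,U)=(AU,AU)=1$ for $g$-unit $U\in\t$, and to express it via Proposition~\ref{prop:RicInvMetricGZ} in terms of bracket constants and the metric coefficients. For $r+1\le i\le r+\ell$, Proposition~\ref{prop:bracconst} kills every bracket constant $[ijk]_{\tilde\varphi}$ with $j\neq k$ in the base or with two toral indices. The only surviving terms are therefore the $[ijj]_{\tilde\varphi}$ with $1\le j\le r$. Since $\n_i$ is one-dimensional, the Ricci formula (equivalently $|A U|^2$ computed with $A_XU=\tfrac12[X,U]$-type expressions) gives
\begin{equation*}
  1=\Ric_g\Big(\tfrac{e_i}{\sqrt{x_i}},\tfrac{e_i}{\sqrt{x_i}}\Big)=\frac{1}{x_i}\sum_{j=1}^r\frac{x_i^2}{4x_j^2}\,[ijj]_{\tilde\varphi}
  =\frac{x_i}{4}\sum_{j=1}^r\frac{[ijj]_{\tilde\varphi}}{x_j^2}.
\end{equation*}
Here $e_i$ spans $\n_i$ with $Q(e_i,e_i)=1$. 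We should double-check this against Proposition~\ref{prop:RicInvMetricGZ}: the terms with $r=s=j$ give $\frac{x_i^2}{4x_j^2}$, and there is no $\h$-term because $[e_i,\n]\subset\n$. Up to normalisation constants, this is the only computation needed. In particular $[ijj]_{\tilde\varphi}$ cannot all vanish, which is consistent with Lemma~\ref{lem:E1-nonzero-coeff}.

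Next we insert the uniform bounds. By Corollary~\ref{cor:base_bounded}, $C_1^{-1}\le x_j\le C_1$ for $1\le j\le r$. Lemma~\ref{lem:E1-nonzero-coeff} applies under the hypothesis $\h=0$ or $\m(\h,\a)\neq0$. Its part~(a) gives $[ijj]_{\tilde\varphi}\le\Lambda$, so
\begin{equation*}
  \sum_{j}\frac{[ijj]_{\tilde\varphi}}{x_j^2}\le r\Lambda C_1^2 ,
\end{equation*}
and hence $x_i\ge 4/(r\Lambda C_1^2)$. Its part~(b) gives some $j$ with $[ijj]_{\tilde\varphi}\ge\delta$, so the sum is at least $\delta C_1^{-2}$, and hence $x_i\le 4C_1^2/\delta$. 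Taking $C_2$ as the maximum of the reciprocals of these constants proves the claim. All constants are independent of $M$ and of the decomposition, because Lemma~\ref{lem:E1-nonzero-coeff} is uniform over the compact space $\F^{\G/\H\A}\times V_\ell(\a)$.

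The main obstacle is the Ricci computation in the first step. One must ensure that the diagonalizing decomposition $\tilde\varphi$ of Proposition~\ref{prop:G-invariant-right-invariant-Submersion} can be used both in Corollary~\ref{cor:base_bounded} and here. One must also verify carefully that no cross terms with $s\neq r$ or toral $r,s$ survive. Both points follow from Proposition~\ref{prop:bracconst}, since $\n_j$ is $\Ad(\A)$-invariant, so $[\t,\n_j]\subset\n_j$. A subtle point is that the upper bound on $x_i$ genuinely needs part~(b) of Lemma~\ref{lem:E1-nonzero-coeff}; this is why the case $\m(\h,\a)=0$ with $\h\neq0$ is excluded. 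That case is treated separately via the product splitting of Lemma~\ref{lem:product_space}.
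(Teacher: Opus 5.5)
Your proposal is correct and follows the paper's proof essentially verbatim. You obtain the vertical Einstein equation $\frac{x_i}{4}\sum_{j=1}^r [ijj]_{\tilde\varphi}/x_j^2=1$ from Proposition~\ref{prop:RicInvMetricGZ}, then combine Corollary~\ref{cor:base_bounded} with parts (a) and (b) of Lemma~\ref{lem:E1-nonzero-coeff} to get the lower and upper bounds respectively, which yields $C_2=\max\{r\Lambda C_1^2/4,\,4C_1^2/\delta\}$.
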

\begin{proof}
  Let $\tilde\varphi \in \F^{\G/\H\A}\times V_\ell(\a)$ be a diagonalizing decomposition for the metric $g$ and let $\{U_j\}$ be a $g$-orthonormal basis for $\t$, adapted to the decomposition $\tilde\varphi$. Now, fix $i$ with $r+1\leq i \leq r+\ell$. Using Proposition~\ref{prop:RicInvMetricGZ} to compute the Ricci tensor in the vertical directions, along with the assumption that the Einstein constant is one, we obtain
  \begin{equation}\label{eqn:RicVert}
    \Ric(U_i, U_i) = \frac{x_i}{4}\sum_{j=1}^{r} \frac{[ijj]_{\tilde\varphi}}{x_j^2} = 1.
  \end{equation}
  Recall from Lemma~\ref{lem:E1-nonzero-coeff}\eqref{item:E1-nonzero-coeff_ii} that, there exists $\delta>0$ such that $[ijj]_{\tilde\varphi}\geq \delta$ for some $1\leq j \leq r$. Moreover, by Corollary~\ref{cor:base_bounded}, we have $x_k \leq C_1$ for all $1\leq k \leq r$. Combining these two estimates with equation~\eqref{eqn:RicVert}, we obtain
  \begin{equation*}
    1 = \frac{x_i}{4} \sum_{k=1}^{r}\frac{[ikk]_{\tilde\varphi}}{x_k^2}
    \ge \frac{x_i}{4} \cdot \frac{[ijj]_{\tilde\varphi}}{x_j^2}
    \geq \frac{x_i}{4} \cdot \delta \cdot \frac{1}{C_1^2},
  \end{equation*}
  which yields the upper bound. The lower bound follows in a similar fashion from Lemma~\ref{lem:E1-nonzero-coeff}\eqref{item:E1-nonzero-coeff_i}, Corollary~\ref{cor:base_bounded}, and equation~\eqref{eqn:RicVert}.
\end{proof}
Having established these estimates, we are now ready to prove Theorem~\ref{T:ThmA-more-general}.
\begin{proof}[Proof of Theorem~\ref{T:ThmA-more-general}]
  To prove the theorem, we divide the argument into two cases.\smallskip

  \noindent{\it Case 1: For the homogeneous space $\G/\H\A$, either $\h=0$ or $\m(\h,\a)\neq 0$.}\\
  Let $(M_i, g_i, \tilde\varphi_i)$ be a sequence such that, for each $i$, $M_i = \G/\H\tilde\A_i \in \K(\G,\H,\ell)$ is a homogeneous space, $g_i$ is an $\A$-submersion Einstein metric on $M_i$, and $\tilde\varphi_i \in \F^{\G/\H\A} \times V_{\ell}(\a)$ is a diagonalizing decomposition for $g_i$.\smallskip

  It follows from Proposition~\ref{prop:Basis_diagonalize} that $(M_i, \tilde\varphi_i)$ converges, up to a subsequence, to $(M, \tilde\varphi)$, where $M \in \K(\G,\H,\ell)$ is a (locally) homogeneous space and $\tilde\varphi \in \F^{\G/\H\A} \times V_{\ell}(\a)$ is a decomposition for $M$. We now show that $g_i$ converges to an $\A$-submersion Einstein metric $g$ on $M$.\smallskip

  For convenience of notation, we write the eigenvalues of $g_i$ as $x^{(i)} = (x_1^{(i)},\dots,x_r^{(i)},x_{r+1}^{(i)},\dots,x_{r+\ell}^{(i)}) \in (\R_+)^{r+\ell}$. Corollary~\ref{cor:base_bounded} and Proposition~\ref{prop:fiber_bounded} imply that there exists a closed disc $D \subset (\R_+)^{r+\ell}$ such that $x^{(i)} \in D$ for all $i$. By compactness of $D$, we obtain $x^{(i)} \to x \in D$. Let $g$ be the metric on $M$ whose coordinates (with respect to the limit decomposition $\tilde\varphi$) are given by the entries of $x$. It follows by Proposition~\ref{prop:G-invariant-right-invariant-Submersion} that $g$ is an $\A$-submersion metric.\smallskip

  It remains to show that the Einstein condition is preserved in the limit. To this end, it suffices to show that the Ricci tensors of $(M_i, g_i, \tilde\varphi_i)$ converge to the Ricci tensor of $(M,g,\tilde\varphi)$. First note that the convergence $\tilde\varphi_i \to \tilde\varphi$ implies that $Q$-orthonormal bases adapted to $\tilde\varphi_i$ can be chosen so that they converge to a $Q$-orthonormal basis adapted to $\tilde\varphi$. This yields continuity of the underlying Lie algebraic data associated with the decompositions, as reflected in the continuity of the bracket constants in Proposition~\ref{prop:BracketsCont}. Consequently, the functions defining the Ricci tensor in Proposition~\ref{prop:RicInvMetricGZ} depend continuously on both the metric parameters and the decompositions. The convergence of the Ricci tensors follows by continuity. Hence $(M,g)$ is Einstein with Einstein constant 1.\smallskip

  \noindent{\it Case 2: For the homogeneous space $\G/\H\A$, we have $\m(\h,\a)=0$.}\\
  By Lemma~\ref{lem:product_space}, we have the splitting $\G = \G_1\times \G_2$, and $\G/\H\A = \G_1/\A \times \G_2/\H$, which induces a splitting $M_i=\G/\H\tilde\A_i = \G_1/\tilde\A_i\times \G_2/\H,$ satisfying
  \[ T_o (\G_1/\tilde\A_i) \cong \t_i \oplus \m_0^{\rm rs}, \qquad T_o (\G_2/\H) \cong \m_0^{\a}. \]
  Further, since $g_i$ is an $\Ad(\H\A)$-invariant metric, and the two submodules $\t_i \oplus \m_0^{\rm rs}$ and $\m_0^{\a}$ are $\Ad(\H\A)$-inequivalent, it follows that $g_i$ is a product metric. Thus $(M_i, g_i) \cong (\G_1/\tilde\A_i, g_i^{(1)}) \times (\G_2/\H, g_i^{(2)})$, where $g_i^{(1)}$ and $g_i^{(2)}$ are Einstein metrics on the respective factors. Note that $\G_1/\tilde\A_i$ converges to $\G_1/\tilde\A$, where $\tilde\A \subset \A$. We show that the sequence $g_i^{(1)}$ converges to an $\A$-submersion Einstein metric $g^{(1)}$ on $\G_1/\tilde \A$ and $g_i^{(2)}$ converges to an Einstein metric $g^{(2)}$ on $\G_2/\H$.\smallskip

  To prove the convergence of $g_i^{(1)}$, note that the semisimple part of $\A\subset\G_1$ is trivial, and $\a$ acts non-trivially on the sum of root spaces $\m_0^{\rm rs}$. Thus we are in Case~1, and obtain the desired convergence of $g_i^{(1)}$ to $g^{(1)}$.\smallskip

  To prove the convergence of $g_i^{(2)}$, first observe that $\rank \G_2=\rank \H$. Then, since the metrics are Einstein with Einstein constant $1$, using Proposition~\ref{prop:Compactness_Fixed_Space}, we deduce that the metric parameters are bounded uniformly from above and away from zero. Therefore, up to subsequence, the metrics $g_i^{(2)}$ converge to a metric $g^{(2)}$. Then, using continuity of the bracket constants and Ricci tensor as in Case 1, we see that the limit metric $g^{(2)}$ is also Einstein.\smallskip

  Noting that the product metric $g^{(1)}\oplus g^{(2)}$ is an $\Ad(\H\A)$-invariant Einstein metric completes the proof.
\end{proof}

\section{Subtori are generically of submersion type}\label{sec:generic_char}
In this section we recall the notion of \emph{submersion type} and review some properties of the isotropy representation of $\G/\H\A$. Thereafter we prove Theorem~\ref{intthm:generic} by giving an algebraic characterization of homogeneous spaces of submersion type.

\begin{definition}\label{def:generic}
  We say that a homogeneous space $\G/\H\tilde\A\in\K(\G,\H,\ell)$ is \emph{of submersion type} if every $\G$-invariant metric on it is an $\A$-submersion metric with respect to the torus bundle~\eqref{eqn:homtorusbundle}. A subtorus $\tilde\A\subset\A$ or respectively its Lie algebra $\tilde\a\subset\a$ is \emph{of submersion type} if the homogeneous space $M = \G/\H\tilde\A$ is.
\end{definition}

Recall the isotropy representation~\eqref{eqn:isodecGHss} of $\G/\H\A$, where each $\n_j$ is an irreducible $\Ad(\H\A)$-submodule of $\n$. By construction of $\A$, there cannot be any $\n_j$ on which $\H\A$ acts trivially. Indeed, any non-zero element in a $\Ad(\H\A)$-trivial summand would generate a 1-parameter subgroup of $\G$ that commutes with $\H\A$, contradicting the maximality of $\A$ in $\N_\G(\H)/\H$. Note that for every $a=\exp(x)\in\A$ and $h\in\H$ it holds that $hah^{-1}=\exp(\Ad_h(x)) = \exp(x) =a$ since $x\in\a\subset\m_0$. Therefore, the map $\H\times\A\to\H\A$, $(h,a)\mapsto ha$, is a surjective group homomorphism and every $\H\A$-module is in particular also an $\H\times\A$-module.\smallskip

To understand the isotropy representation, we examine its irreducible submodules, thought of as $\Ad(\H\times\A)$-modules. The following lemma can be deduced from general theory about representations of compact Lie groups (see, e.g., \cite[Ch.~II]{BD85}), however, for the convenience of the reader, we provide an elementary proof.
\begin{lemma}\label{lem:weights_of_irreps}
  Let $V$ be a real non-trivial irreducible $\H\times\A$-module, then either
  \begin{enumerate}
    \item \label{item:weights_of_irreps1}$\A$ acts trivially and $V$ is an irreducible $\H$-module, or
    \item \label{item:weights_of_irreps2} $V$ is of complex type and $\A$ acts by multiplication with complex units with respect to the corresponding complex structure. That is, there exists an $I\in\End(V)$ with $I^2=-\Id_V$ which commutes with the $\H\times\A$-action and there is a non-zero linear map $\mu:\a\to\R$, such that $\exp(x).v = e^{\mu(x)I}(v)$ for all $x\in\a$ and $v\in V$.
  \end{enumerate}
\end{lemma}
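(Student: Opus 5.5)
We will use Schur's lemma for the $\H$-action together with the fact that $\A$ is a torus commuting with $\H$. Since $\H\times\A$ is a product, the action of $\A$ on $V$ commutes with the action of $\H$. Hence $\A$ acts through the group of $\H$-equivariant automorphisms of $V$.

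\textbf{Step 1: the fixed subspace.} Let $V^\A=\{v\in V: a.v=v \ \forall a\in\A\}$. It is $\H\times\A$-invariant because $\H$ commutes with $\A$. By irreducibility, $V^\A$ is either $V$ or $0$. If $V^\A=V$, then $\A$ acts trivially. Every $\H$-invariant subspace is then $\H\times\A$-invariant, so $V$ is an irreducible $\H$-module, which is case~\eqref{item:weights_of_irreps1}.

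\textbf{Step 2: a complex structure from a generic element.} Assume $V^\A=0$. Differentiating gives a representation $\a\to\mathfrak{so}(V)$ (after averaging the inner product over the compact group) by pairwise commuting skew-symmetric operators $\rho(x)$, each commuting with $\H$. We simultaneously block-diagonalize these operators. $V$ splits into the common kernel, which equals $V^\A=0$ since $\A$ is connected, and joint isotypic pieces $V_{\pm\mu}$. On $V_{\pm\mu}$ each $\rho(x)^2$ acts as $-\mu(x)^2$ for a linear form $\mu\neq0$, determined up to sign.

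Each $V_{\pm\mu}$ is $\H\times\A$-invariant, so irreducibility gives a single $\mu$ with $\rho(x)^2=-\mu(x)^2\Id$. Choose $x_0\in\a$ with $\mu(x_0)\neq0$ and set $I:=\rho(x_0)/\mu(x_0)$. Then $I^2=-\Id_V$, and $I$ commutes with $\H$ and with $\rho(\a)$.

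\textbf{Step 3: identifying the action of $\a$.} We must show $\rho(x)=\mu(x)I$ for all $x$. Fix $x$ and set $S:=\rho(x)-\mu(x)I$; this is the main technical point. $S$ is skew and commutes with $I$ and with $\H\times\A$. Because $\rho(x)$ and $\mu(x)I$ commute, we can compute
\[
S^2=\rho(x)^2-2\mu(x)\rho(x)I+\mu(x)^2I^2=-2\mu(x)^2\Id-2\mu(x)\rho(x)I .
\]
The operator $J=-\rho(x)I/\mu(x)$ is symmetric with $J^2=\Id$ (when $\mu(x)\neq0$). Its $\pm1$ eigenspaces are $\H\times\A$-invariant, so irreducibility forces $J=\pm\Id$, that is, $\rho(x)=\pm\mu(x)I$.

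The sign is locally constant on the dense open set $\{\mu\neq0\}$, which has at most two components, $\mu>0$ and $\mu<0$. Replacing $\mu$ by $-\mu$ on one of them, linearity of $\rho$ shows the sign choice defines a linear $\mu$ with $\rho(x)=\mu(x)I$. Here we use that $x\mapsto\rho(x)$ is linear and $I$ is fixed, so the map $x\mapsto \rho(x)I^{-1}$ is a linear map into the scalars. When $\mu(x)=0$ we have $\rho(x)^2=0$, hence $\rho(x)=0$ since $\rho(x)$ is skew, so the formula holds there too.

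Exponentiating gives $\exp(x).v=e^{\mu(x)I}v$. The structure $I$ is $\H\times\A$-equivariant, so $V$ is of complex type (or quaternionic type, which is still compatible with the existence of such an $I$). This gives case~\eqref{item:weights_of_irreps2}.
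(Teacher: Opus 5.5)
Your proof is correct, but it takes a different route from the paper's. The paper works at the group level. It picks an irreducible $\A$-submodule $V_\A\subset V$, which is either trivial or a $2$-plane rotated with weight $\mu$. It writes $V$ as a direct sum of $\H$-translates $h_i.V_\A$; each translate is an $\A$-module isomorphic to $V_\A$ because the two actions commute. This rules out the trivial case, and the paper then \emph{defines} $I$ on all of $V$ as the action of the quarter-turn element $a=\exp\bigl(\tfrac{\pi}{2\mu(x)}x\bigr)\in\A$. The properties $I^2=-\Id_V$, equivariance, and $\exp(x).v=e^{\mu(x)I}v$ then follow by commuting $a$ and $\exp(x)$ past the $h_i$.

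You instead linearize and use the weight decomposition of the commuting skew operators $\rho(\a)$. The $\H$-invariance of the pieces $V_{\pm\mu}$ plays the role of the paper's $\H$-translates: it is exactly why only one weight up to sign survives. Your $I=\rho(x_0)/\mu(x_0)$ is the infinitesimal version of the paper's $a$; indeed $\exp\bigl(\tfrac{\pi}{2\mu(x_0)}x_0\bigr)$ acts as $e^{\frac{\pi}{2}I}=I$. The paper's argument is more elementary, needing neither simultaneous block diagonalization nor complexification, and it produces $I$ directly as the action of one element of $\A$. Yours relies on standard structure theory but makes the isotypic structure and the invariance of weight spaces under $\H$ transparent.

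A few minor points:
\begin{enumerate}
\item Step~3 re-proves something Step~2 already gives. If you orient each $2$-plane in $V_{\pm\mu}$ so that its weight is $+\mu$, then $\rho(x)=\mu(x)I$ holds at once. So the involution $J$ and the second use of irreducibility are unnecessary, and the computation of $S^2$ is never used.
\item The paragraph on signs is muddled: ``replacing $\mu$ by $-\mu$ on one of the components'' would not produce a linear function. Your final sentence is the right argument. Since $\rho(\a)\subseteq\R I$, the function $\lambda$ defined by $\rho(x)=\lambda(x)I$ is linear. In fact $\lambda=\mu$, because $\lambda^2=\mu^2$ forces $\lambda=\pm\mu$, and $\lambda(x_0)=\mu(x_0)\neq0$.
\item The quaternionic-type hedge is inaccurate, because that case cannot occur. $I$ is the action of an element of $\A$, so it is central in the division algebra $\End_{\H\times\A}(V)$. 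That algebra therefore contains a central element squaring to $-\Id$, so it is $\mathbb{C}$ rather than $\mathbb{H}$, and $V$ is genuinely of complex type.
\end{enumerate}
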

\begin{proof}
  We may assume that $\A$ acts non-trivially on $V$, since otherwise~\eqref{item:weights_of_irreps1} is immediate. Let $V_\A\subset V$ be an irreducible $\A$-submodule of $V$. Since $\A$ is a torus, the (real) representation is either trivial or two dimensional (see~\cite[Ch.~II~Prop.\,8.5]{BD85}). In the latter case, there exists a complex structure, i.e., an endomorphism $I\in\End(V_\A)$ such that $I^2=-\Id_{V_\A}$, and a weight $\mu:\a\to\R$, such that
  \[ \exp(x).v = e^{\mu(x)I}(v) = \cos(\mu(x))v + \sin(\mu(x))I(v), \quad\text{ for all } v\in V_\A \text{ and } x\in\a. \]
  Note, that for any $x\in\a$ with $\mu(x)\neq0$ we obtain $a:=\exp\left(\tfrac{\pi}{2\mu(x)}x\right)\in\A$ with $a.v=I(v)$ for all $v\in V_\A$.\smallskip

  We first decompose $V$ into irreducible $\A$-modules. Choose $v\in V_{\A}\setminus\{0\}$ and $h_1\in \H$ such that $h_1.v\in V\setminus V_{\A}$. Because the $\A$- and $\H$-actions commute, the subspace $h_1.V_{\A}$ is again an irreducible $\A$-module. Inductively we obtain elements $e=h_0,h_1,\dots,h_q\in \H$, with $q\ge 0$, such that \mbox{$V=h_0.V_{\A}\oplus h_1.V_{\A}\oplus \dots \oplus h_q.V_{\A}$}.\smallskip

  We now show that $V_\A$ is a non-trivial $\A$-module. Suppose, for contradiction, that $\A$ acts trivially on $V_\A$, in particular $\dim V_\A=1$. Since $V$ is a non-trivial irreducible $\H\times\A$-module and since every non-trivial irreducible representation of a connected Lie group has dimension at least $2$, it follows that $V_\A\neq V$, i.e., $q\ge1$. Furthermore, $\A$ acts trivially on each summand $h_i.V_\A$ and hence $\A$ acts trivially on $V$, contradicting our assumption.\smallskip

  Thus $\A$ acts non-trivially on each summand $h_i.V_{\A}$. As described above, we choose an $a\in\A$ such that $a.v=I(v)$ for $v\in V_0$ and extend the complex structure $I$ to $V$ by $I(v):=a.v$. Evidently, since the $\A$- and $\H$-actions are linear and commute, this extended $I$ satisfies $I^2=-\Id_V$ and commutes with the $\H\times\A$-action. The desired properties of $\mu$ and the description of the $\A$-action on $V$ follow directly.
\end{proof}
The following is an immediate consequence.
\begin{corollary}\label{cor:Torus_Action_Rest}
  Let $(V, \rho)$ be an irreducible representation of a torus $\A$ and assume that for a subtorus $\tilde\A$, the restricted representation $(V,\rho|_{\tilde\A})$ is non-trivial. Then for any $a\in \A$ there exists $\tilde a\in \tilde\A$ such that $\rho(a)=\rho(\tilde a)$.
\end{corollary}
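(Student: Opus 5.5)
The approach is to reduce the statement to the structure of irreducible representations of a torus. Since $\A$ is abelian, every irreducible real representation $(V,\rho)$ of $\A$ is either one-dimensional and trivial, or two-dimensional. In the two-dimensional case there is a complex structure $I$ on $V$ and a non-zero weight $\mu:\a\to\R$ with $\rho(\exp x)=e^{\mu(x)I}$. This is the special case $\H=\{e\}$ of Lemma~\ref{lem:weights_of_irreps}, and the fact itself is \cite[Ch.~II~Prop.\,8.5]{BD85}. The hypothesis that $\rho|_{\tilde\A}$ is non-trivial excludes the trivial case, so we may assume $V$ is two-dimensional with $\rho(\A)\subseteq\{e^{tI}: t\in\R\}\cong \S^1$.

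The key step is to show that $\rho(\tilde\A)$ is all of this circle. Since $\tilde\A$ is a compact connected subtorus, $\rho(\tilde\A)$ is a compact connected subgroup of the circle $\{e^{tI}\}\cong\mathsf{SO}(2)$. The only such subgroups are the trivial group and the whole circle. By hypothesis $\rho(\tilde\A)$ is non-trivial, so $\rho(\tilde\A)=\{e^{tI}:t\in\R\}\supseteq\rho(\A)$.

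At the Lie algebra level the same argument reads as follows. Non-triviality of $\rho|_{\tilde\A}$, together with connectedness of $\tilde\A$, gives some $y\in\tilde\a$ with $\mu(y)\neq0$. For $a=\exp(x)\in\A$, set $\tilde a:=\exp\bigl(\tfrac{\mu(x)}{\mu(y)}y\bigr)\in\tilde\A$. Then
\[\rho(\tilde a)=e^{\mu(x)I}=\rho(a).\]
Every $a\in\A$ is of the form $\exp(x)$ because $\A$ is a connected torus, so this covers all of $\A$ and finishes the proof.

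The only point needing care is the initial reduction to the two-dimensional case, specifically that the weight description holds on all of $V$ and not just on a submodule. This is exactly where irreducibility of $V$ under $\A$ is used, and it is already contained in Lemma~\ref{lem:weights_of_irreps}. The rest is routine. If $\tilde\A$ is only locally closed (a non-compact immersed subgroup, as in Remark~\ref{Rem:Locally_Hom}), the Lie algebra version of the argument still applies unchanged.
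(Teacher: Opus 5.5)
Your proof is correct and takes the same route as the paper. The paper treats the corollary as an immediate consequence of Lemma~\ref{lem:weights_of_irreps} with trivial $\H$, so that $\rho(\exp x)=e^{\mu(x)I}$ with $\mu|_{\tilde\a}\neq 0$; hence every value $\mu(x)$ is attained on $\tilde\a$, which is exactly your rescaling $\tilde x=\tfrac{\mu(x)}{\mu(y)}y$, and it is in this form that the corollary is later used in the proof of Lemma~\ref{lem:submersion_type}.
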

The following lemma is our main technical result which gives an algebraic characterization of subtori of submersion type. In order to state it, we introduce the following notation.
Based on the description of the $\Ad(\H\A)$-action on $\n$ in Lemma~\ref{lem:weights_of_irreps} we define the following sets.
\begin{align*}
  \J_1 & :=\left\{\; \mu:\a\to\R \;\middle|\; \mu \text{ is a non-zero weight of an irreducible $\Ad(\H\A)$-submodule } \n_i\subseteq\n. \;\right\}\\
  \J_2 & :=\left\{ \{\mu,\nu\}\subseteq \J_1 \;\middle|\;
    \begin{gathered}
      \mu, \nu \text{ are weights of irreducible $\Ad(\H\A)$-submodules } \n_i, \n_j\subset \n,\\[-1ex]
      \text{that are equivalent as $\Ad(\H)$-modules, and } \mu\pm\nu\neq0.
    \end{gathered} \right\}
\end{align*}
Note that these sets are finite. Further note that if $(\mu,I)$ is a pair as in Lemma~\ref{lem:weights_of_irreps}\eqref{item:weights_of_irreps2}, then $(-\mu, -I)$ is another such pair. In other words, if $\mu \in \J_1$, then $-\mu \in \J_1$ as well.\smallskip

Now, fix $0<\ell<a$ and let $\E_\ell$ be the set of all $(a-\ell)$-dimensional subalgebras of $\a$ given by
\[
\E_\ell = \left\{\; \tilde\a\in \Gr_{a-\ell}(\a) \;\;\middle|\;\;
\tilde\a\subseteq \Big(\cup_{\mu\in\J_1}\ker(\mu)\Big) \bigcup
\Big(\cup_{\{\mu,\nu\}\in\J_2}\ker(\mu\pm\nu)\Big) \;\right\}.
\]
\begin{lemma}\label{lem:submersion_type}
 Let $\mathcal{G}_{\ell}$ be the set of $(a-\ell)$-dimensional subalgebras of $\a$ which are of submersion type. Then
  \[ \Gr_{a-\ell}(\a) \setminus \E_{\ell}\subseteq\mathcal{G}_{\ell}. \]
\end{lemma}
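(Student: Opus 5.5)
Fix $\tilde\a\notin\E_\ell$. The plan is to show directly that every $\Ad(\H\tilde\A)$-invariant inner product $g$ on $\tilde\n=\t\oplus\n$ is already $\Ad(\H\A)$-invariant. That statement is equivalent to $g$ being an $\A$-submersion metric, by Proposition~\ref{prop:G-invariant-right-invariant-Submersion} and the discussion preceding it. Since $\A$ is connected, it suffices to prove that $\Ad(\tilde\A)$-invariance of $g$ forces $\Ad(\A)$-invariance. The orthogonality $g(\t,\n)=0$ will also fall out.

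Step 1 treats the single modules. Decompose $\n=\n_1\oplus\cdots\oplus\n_r$ into $\Ad(\H\A)$-irreducibles. By the maximality of $\A$, no summand is $\H\A$-trivial. Some summands are of type~\eqref{item:weights_of_irreps1} of Lemma~\ref{lem:weights_of_irreps}, with $\A$ acting trivially. The others are of type~\eqref{item:weights_of_irreps2}, with weight $\mu_j\in\J_1$. Because $\tilde\a\not\subseteq\ker\mu_j$, the restriction of $\mu_j$ to $\tilde\a$ is nonzero. Corollary~\ref{cor:Torus_Action_Rest} then says that on each such $\n_j$ every element of $\A$ acts as some element of $\tilde\A$. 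So each $\n_j$ stays $\Ad(\H\tilde\A)$-irreducible and is still of complex type.

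Step 2 identifies all $\Ad(\H\tilde\A)$-equivariant maps between summands. A $g$-self-adjoint operator $P$ with $g=Q(P\cdot,\cdot)$ commutes with $\H\tilde\A$, so its blocks $P_{jk}:\n_k\to\n_j$, and the blocks between $\t$ and $\n$, are $\H\times\tilde\A$-equivariant. The cases are as follows.

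\begin{enumerate}
\item Here $\t$ is $\H\tilde\A$-trivial while every $\n_j$ is nontrivial for $\H\tilde\A$. That is clear for summands of type~\eqref{item:weights_of_irreps2} by Step~1. For type~\eqref{item:weights_of_irreps1} summands, $\H$ acts nontrivially, since otherwise the summand would be $\H\A$-trivial. Hence there are no equivariant maps between $\t$ and $\n$, giving $g(\t,\n)=0$. Blocks inside $\t$ are unconstrained, and that is fine because $\A$ acts trivially on $\t$.
\item If both summands are of type~\eqref{item:weights_of_irreps1}, $\A$ acts trivially on both and the block is automatically $\A$-equivariant.
\item If one summand is of type~\eqref{item:weights_of_irreps1} and the other of type~\eqref{item:weights_of_irreps2}, then $\tilde\A$ acts trivially on the first and nontrivially on the second. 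The block vanishes.
\item If both are of type~\eqref{item:weights_of_irreps2}, with weights $\mu,\nu$, an $\H$-equivariant map $L$ exists only if the modules are $\Ad(\H)$-equivalent. If $\mu=\pm\nu$, then $\A$ acts through the same circle $e^{\mu(x)I}$, up to conjugating $I$. Then $\tilde\A$-equivariance already implies $\A$-equivariance, using Corollary~\ref{cor:Torus_Action_Rest} applied simultaneously to both modules.
\item If $\mu\pm\nu\neq0$, then $\{\mu,\nu\}\in\J_2$ and $\tilde\a\not\subseteq\ker(\mu+\nu)\cup\ker(\mu-\nu)$. Decompose $L=L_++L_-$ into its $I$-complex-linear and $I$-antilinear parts, where $I$ transports via the $\H$-equivalence. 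Equivariance for $\exp(x)$, $x\in\tilde\a$, gives $L_\pm e^{\nu(x)I}=e^{\mu(x)I}L_\pm$. For the linear part this forces $(\mu-\nu)|_{\tilde\a}=0$ unless $L_+=0$. For the antilinear part it forces $(\mu+\nu)|_{\tilde\a}=0$ unless $L_-=0$. Hence $L=0$.
\end{enumerate}

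The same argument applies to the diagonal blocks $P_{jj}$. Each one commutes with $\tilde\A$, which acts on $\n_j$ by a nontrivial circle that coincides with the $\A$-image by Corollary~\ref{cor:Torus_Action_Rest}. So $P_{jj}$ commutes with $\A$.

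Conclusion. All blocks of $P$ commute with $\Ad(\A)$, so $g$ is $\Ad(\H\A)$-invariant and $\tilde\a\in\mathcal G_\ell$. The main obstacle is case (iv) when $\mu=\pm\nu$. There, and in (v), one must handle the complex structure carefully, since a real $\H$-equivalence need not intertwine the two $I$'s. The $\H$-commutant may be $\mathbb C$ or $\mathbb H$ rather than $\R$. I would first normalize by choosing the $\H$-isomorphism to intertwine the $I$'s, using that $I$ lies in the commutant. Then I would split a general intertwiner into $I$-linear and $I$-antilinear parts, as above.
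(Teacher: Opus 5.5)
Your proposal is correct and follows the paper's proof in all essentials. It uses the same reduction to showing that every $\Ad(\H\tilde\A)$-invariant inner product on $\tilde\n$ is $\Ad(\H\A)$-invariant, the same use of Corollary~\ref{cor:Torus_Action_Rest} to get $\Ad(\H\tilde\A)$-irreducibility of each summand and to settle the case $\mu=\pm\nu$, and Schur's lemma for the $\t$--$\n$ and mixed-type blocks.

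The only divergence is the case $\mu\neq\pm\nu$. You kill the intertwiner by splitting it into $I$-linear and $I$-antilinear parts, which needs only $(\mu\pm\nu)|_{\tilde\a}\neq0$. The paper instead picks $x\in\tilde\a$ with $\mu(x)=2\pi$ and $0<|\nu(x)|<2\pi$, so that $\exp(x)$ fixes one summand pointwise but has no nonzero fixed vector on the other. This shows the two summands are inequivalent $\Ad(\H\tilde\A)$-modules and sidesteps the complex-structure bookkeeping you flagged as the main obstacle. You can also drop that normalization in your own route by setting $L_\pm=\tfrac12(L\mp ILJ)$, with $J$ the complex structure on the source.
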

\begin{proof}
  \newcommand{\scprod}{$\langle\cdot,\cdot\rangle$}
  Let $\tilde\a\in \Gr_{a-\ell}(\a) \setminus \E_{\ell}$ and let $\tilde\A\subseteq \A$ be the corresponding subtorus. Using the notation from Section~\ref{sec:homtorbun}, we show that for any $\Ad(\H\tilde\A)$-invariant scalar product \scprod\ on $\tilde\n$, the $Q$-orthogonal splitting $\tilde\n=\t\oplus\n$ is \scprod-orthogonal and that the restriction \scprod$|_\n$ is $\Ad(\H\A)$-invariant.\smallskip

  To prove the \scprod-orthogonality of $\t$ and $\n$ we first show that $\n$ cannot contain any trivial $\Ad(\H\tilde\A)$-submodule, by examining the individual summands of the $Q$-orthogonal and $\Ad(\H\A)$-invariant splitting $\n=\m_0^{\rm rs}\oplus\m_0^\a\oplus\m(\h,\a)$ (see~\eqref{eqn:rough_decom}). By Lemma~\ref{lem:weights_of_irreps}, the modules $\m_0^\a$ and $\m(\h,\a)$ cannot contain any trivial $\Ad(\H)$-submodule and therefore also no trivial $\Ad(\H\tilde\A)$-submodule. Any $\H\A$-submodule $\n_i$ of $\m_0^{\rm rs}$ is a root space of $\A$ and has a weight $\mu:\a\to\R$. The assumption $\tilde\a\notin\E_\ell$ includes the condition $\tilde\a\not\subseteq\ker(\mu)$ which implies $\tilde\A$- and hence $\H\tilde\A$-irreducibility of $\n_i$. Finally, Schur's lemma and $\Ad(\H\tilde\A)$-invariance of \scprod\ yield $\langle\n_i,\t\rangle=0$, since the $\Ad(\H\tilde\A)$-action on $\t$ is trivial.\smallskip

  It remains to show that \scprod$|_\n$ is $\Ad(\H\A)$-invariant. To this end, let $\varphi=(\n_1,\dots,\n_r)$ be an $\Ad(\H\A)$-irreducible decomposition of $\n$. We prove that, for each $1\leq i\leq r$, the restriction $\langle\cdot,\cdot\rangle|_{\n_i}$ is $\Ad(\H\A)$-invariant and for any $1\leq i<j\leq r$ the restriction \scprod$|_{\n_i\oplus\n_j}$ is also $\Ad(\H\A)$-invariant.\smallskip

  For the $\Ad(\H\A)$-invariance of  $\langle\cdot,\cdot\rangle|_{\n_i}$ we first prove that it is a multiple of $Q$. In fact, we only need to show that $\n_i$ is $\Ad(\H\tilde\A)$-irreducible. The result then follows from the $\Ad(\H\tilde\A)$-invariance of $\langle\cdot,\cdot\rangle$ and Schur's lemma. If $\A$ acts trivially on $\n_i$, then $\n_i$ is clearly $\Ad(\H\tilde\A)$-irreducible. If $\A$ acts non-trivially on $\n_i$ with weight~$\mu$, then $\tilde\A$ also acts non-trivially on $\n_i$ since, by assumption, $\tilde\a\not\subseteq \ker(\mu)$. Now, let $V$ be an $\Ad(\H\tilde\A)$-irreducible space of $\n_i$. By Corollary~\ref{cor:Torus_Action_Rest}, for every $g\in \A$ there exists $\tilde g\in\tilde \a$ such that $g.v=\tilde g.v\in V$ for all $v\in V$. This shows that $V$ is an $\Ad(\H\A)$-submodule of $\n_i$. By $\Ad(\H\A)$-irreducibility of $\n_i$, it holds $V=\n_i$. Therefore, $\n_i$ is $\Ad(\H\tilde\A)$-irreducible.\smallskip

  To show the $\Ad(\H\A)$-invariance of \scprod$|_{\n_i\oplus\n_j}$, we have to consider three cases: either $\A$ acts trivially on $\n_i$ and $\n_j$, or on exactly one of them, or on neither of them. In the first case, there is nothing to show. In the second case, $\n_i$ and $\n_j$ are irreducible as $\Ad(\H\tilde\A)$-modules but not equivalent as such. Therefore, by $\Ad(\H\tilde\A)$-invariance of $\langle\cdot,\cdot\rangle$, Schur's lemma implies that $\langle \n_i,\n_j\rangle = 0$.\smallskip

  In the third case, by Lemma~\ref{lem:weights_of_irreps}, we have two weights $\mu,\nu\in\J_1$ and two complex structures $I$ and $J$ on the respective irreducible summands, such that $\exp(x)\cdot v = e^{\mu(x)I}(v)$ and $\exp(x)\cdot w = e^{\nu(x)J}(w)$, for all $x\in \a$, $v\in \n_i$, $w\in\n_j$. Now, either $\mu=\pm\nu$, or $\mu\neq\pm\nu$. First, assume that $\mu=\pm\nu$. Since, by assumption, $\tilde\a\not\subset\ker(\mu)=\ker(\nu)$, it follows that $\tilde\A$ acts non-trivially on both $\n_i$ and $\n_j$. Therefore, Corollary~\ref{cor:Torus_Action_Rest} implies that for any $x\in\a$, there exists $\tilde x\in \tilde\a$ such that $\pm\nu(x)=\mu(x) = \mu(\tilde x)=\pm\nu(\tilde x)$. This shows that $\exp(x)\cdot v = e^{\mu(x)I}(v) = e^{\mu(\tilde x)I}(v) = \exp(\tilde x)\cdot v$, for any $v\in\n_i$, and similarly $\exp(x)\cdot w=\exp(\tilde x)\cdot w$, for any $w\in\n_j$. Hence,
  \[ \langle \exp(x)\cdot v,\exp(x)\cdot w \rangle = \langle \exp(\tilde x)\cdot v,\exp(\tilde x)\cdot w \rangle = \langle v,w\rangle, \]
  where the last step is simply the application of $\tilde\A$-invariance of $\langle\cdot,\cdot\rangle$.
  Now assume $\mu\neq\pm\nu$. The assumption $\tilde\a\not\subset\ker(\mu\pm\nu)\cup\ker(\mu)\cup\ker(\nu)$ yields that there exists $x\in\tilde\a$ such that $0\neq\mu(x)\neq\pm\nu(x)\neq0$. Without loss of generality assume that $\mu(x) = 2\pi$ and $\abs{\nu(x)}<2\pi$, by possibly interchanging the roles of $\mu$ and $\nu$ and multiplying $x$ by ${\mu(x)}^{-1}\in\R$. Then $\exp(x).v=v$ for all $v\in\n_i$ but $\exp(x).w\neq w$ for all non-zero $w\in\n_j$. This shows that $\n_i$ and $\n_j$ are inequivalent irreducible $\Ad(\H\tilde\A)$-modules and therefore $\langle\n_i,\n_j\rangle=0$.
\end{proof}

We can now prove that the subtori of submersion type are \emph{generic}.
\begin{proof}[Proof of Theorem~\ref{intthm:generic}]
  For any $\mu\in\J_1$ and $\{\rho,\nu\}\in\J_2$ define the following hyperplanes in $\a$:
  \begin{equation*}
    V_\mu            := \ker(\mu), \qquad
    V_{\rho\nu}^\pm  := \ker(\rho \pm \nu).
  \end{equation*}
  Note that there are natural inclusions of the Grassmannians $\Gr_{a-\ell}(V_\mu),\Gr_{a-\ell}(V^{\pm}_{\rho\nu}) \subset \Gr_{a-\ell}(\a)$, corresponding to the subspace inclusions $V_\mu,V^\pm_{\rho\nu}\subset\a$. It follows from the definitions that the set $\E_\ell$ can be written as
  \[
    \E_\ell = \Big(\cup_{\mu\in\J_1} \Gr_{a-\ell} (V_\mu)\Big) \bigcup \Big(\cup_{\{\rho,\nu\}\in\J_2} \Gr_{a-\ell}(V^\pm_{\rho\nu})\Big).
  \]
  This shows that $\E_\ell$ is a finite union of lower-dimensional closed submanifolds of $\Gr_{a-\ell}(\a)$. Hence, by Lemma~\ref{lem:submersion_type}, the interior of $\mathcal{G}_\ell$ is an open and dense subset of $\Gr_{a-\ell}(\a)$, of full measure. This completes the proof.
\end{proof}
\subsection*{Acknowledgements}
We would like to thank Christoph B\"ohm for suggesting this line of enquiry and for countless helpful discussions and insights. We also thank Francesco Pediconi for comments and suggestions.\smallskip

{AMK is supported by the IIT Bombay Seed Grant (RD/0525-IRCCSH0-024).}
{VPNW was funded by the Deutsche Forschungsgemeinschaft (DFG, German Research Foundation) under Germany's Excellence Strategy EXC 2044/2 -- 390685587, Mathematics M\"unster: Dynamics--Geometry--Structure and by the CRC 1442 “Geometry: Deformations and Rigidity” of the DFG.}
{MZ acknowledges support from the Deutsche Forschungsgemeinschaft (DFG, German Research Foundation) under Germany's Excellence Strategy -- EXC 2121 ``Quantum Universe'' -- 390833306.}


\begin{thebibliography}{00}
  \bibitem{AK75}{
    {\sc D. V. Alekseevski\u i}, 
    {\sc B. N. Kimel$'$fel$'$d}, 
    {\em Structure of homogeneous Riemannian spaces with zero Ricci curvature}, 
    {Funkcional. Anal. i Prilo\v zen.} 
    {\bf 9} 
    {(1975)}, 
    {no. 2}, 
    {5--11} 
    {(Russian)}. 
  }

  \bibitem{AW}{
    {\sc S. I. Aloff}, 
    {\sc N. R. Wallach}, 
    {\em An infinite family of distinct {$7$}-manifolds admitting positively curved Riemannian structures}, 
    {Bull. Amer. Math. Soc.} 
    {\bf 81} 
    {(1975)}, 
    {93--97}. 
  }

  \bibitem{And10}{
    {\sc M. T. Anderson}, 
    {\em A survey of Einstein metrics on 4-manifolds}, 
    {in \em Handbook of geometric analysis III}, 
    {1--39}, 
    {Adv. Lect. Math. (ALM)} 
    {\bf 14}, 
    {Int. Press, Somerville, MA}, 
    {2010}. 
  }

  \bibitem{AH}{
    {\sc B. Andrews}, 
    {\sc C. Hopper}, 
    {\em The Ricci flow in Riemannian geometry}, 
    {Lecture Notes in Mathematics}, 
    {2011}, 
    {Springer, Heidelberg}, 
    {2011}. 
  }

  \bibitem{ACS10}{
    {\sc A. Arvanitoyeorgos}, 
    {\sc I. Chrysikos}, 
    {\sc Y. Sakane}, 
    {\em Complete description of invariant Einstein metrics on the generalized flag manifold {${\rm SO}(2n)/{\rm U}(p)\times {\rm U}(n-p)$}}, 
    {Ann. Global Anal. Geom.} 
    {\bf 38} 
    {(2010)}, 
    {no. 4}, 
    {413--438}. 
  }

  \bibitem{Bes08}{
    {\sc A. L. Besse}, 
    {\em Einstein manifolds}, 
    {Classics in Mathematics}, 
    {Springer-Verlag, Berlin}, 
    {2008}. 
    {Reprint of the 1987 edition}. 
  }

  \bibitem{BF25}{
    {\sc R. G. Bettiol}, 
    {\sc H. Friedman}, 
    {\em Counting Homogeneous Einstein Metrics}, 
    {preprint}. 
    {\tt arXiv:2509.09830 [math.DG]}. 
  }

  \bibitem{Boh04}{
    {\sc C. B\"ohm}, 
    {\em Homogeneous Einstein metrics and simplicial complexes}, 
    {J. Differential Geom.} 
    {\bf 67} 
    {(2004)}, 
    {no. 1}, 
    {79--165}. 
  }

  \bibitem{Boh05}{
    \bysame, 
    {\em Non-existence of homogeneous Einstein metrics}, 
    {Comment. Math. Helv.} 
    {\bf 80} 
    {(2005)}, 
    {no. 1}, 
    {123--146}. 
  }

  \bibitem{BK06}{
    {\sc C. B\"ohm}, 
    {\sc M. M. Kerr}, 
    {\em Low-dimensional homogeneous Einstein manifolds}, 
    {Trans. Amer. Math. Soc.} 
    {\bf 358} 
    {(2006)}, 
    {no. 4}, 
    {1455--1468}. 
  }

  \bibitem{BK23}{
    \bysame, 
    {\em Homogeneous Einstein metrics and butterflies}, 
    {Ann. Global Anal. Geom.} 
    {\bf 63} 
    {(2023)}, 
    {no. 4}, 
    {Paper No. 29, 92}. 
  }

  \bibitem{BL23}{
    {\sc C. B\"ohm}, 
    {\sc R. A. Lafuente}, 
    {\em Non-compact Einstein manifolds with symmetry}, 
    {J. Amer. Math. Soc.} 
    {\bf 36} 
    {(2023)}, 
    {no. 3}, 
    {591--651}. 
  }

  \bibitem{BLS19}{
    {\sc C. B\"ohm}, 
    {\sc R. Lafuente}, 
    {\sc M. Simon}, 
    {\em Optimal curvature estimates for homogeneous Ricci flows}, 
    {Int. Math. Res. Not. IMRN} 
    {\bf 14} 
    {(2019)}, 
    {4431--4468}. 
  }

  \bibitem{BWZ04}{
    {\sc C. B\"ohm}, 
    {\sc M. Wang}, 
    {\sc W. Ziller}, 
    {\em A variational approach for compact homogeneous Einstein manifolds}, 
    {Geom. Funct. Anal.} 
    {\bf 14} 
    {(2004)}, 
    {no. 4}, 
    {681--733}. 
  }

  \bibitem{BD85}{
    {\sc T. Br\"ocker}, 
    {\sc \scshape T. t. Dieck}, 
    {\em Representations of compact Lie groups}, 
    {Springer}, 
    {1985}. 
  }

  \bibitem{CN19}{
    {\sc Z. Chen}, 
    {\sc Yu. G. Nikonorov}, 
    {\em Invariant Einstein metrics on generalized Wallach spaces}, 
    {Sci. China Math.} 
    {\bf 62} 
    {(2019)}, 
    {no. 3}, 
    {569--584}. 
  }

  \bibitem{GZ02}{
    {\sc K. Grove}, 
    {\sc W. Ziller}, 
    {\em Cohomogeneity one manifolds with positive Ricci curvature}, 
    {Invent. Math.} 
    {\bf 149} 
    {(2002)}, 
    {no. 3}, 
    {619--646}. 
  }

  \bibitem{Jab23}{
    {\sc M. Jablonski}, 
    {\em Homogeneous Einstein manifolds}, 
    {Rev. Un. Mat. Argentina} 
    {\bf 64} 
    {(2023)}, 
    {no. 2}, 
    {461--485}. 
  }

  \bibitem{Joy00}{
    {\sc D. Joyce}, 
    {\em Compact Manifolds with Special Holonomy}, 
    {Oxford Mathematical Monographs}, 
    {Oxford University Press}, 
    {Oxford}, 
    {2000}. 
  }

  \bibitem{Jen73}{
    {\sc G. Jensen}, 
    {\em Einstein metrics on principal fibre bundles}, 
    {J. Differential Geom.} 
    {\bf 8} 
    {(1973)}, 
    {599--614}. 
  }

  \bibitem{KPS26}{
    {\sc A. M. Krishnan}, 
    {\sc F. Pediconi}, 
    {\sc S. Sbiti}, 
    {\em Toral symmetries of collapsed ancient solutions to the homogeneous Ricci flow}, 
    {J. Lond. Math. Soc. (2)} 
    {\bf 113} 
    {(2026)}, 
    {no. 3}, 
    {Paper No. e70513, 52}. 
  }

  \bibitem{LL14}{
    {\sc R. Lafuente}, 
    {\sc J. Lauret}, 
    {\em Structure of homogeneous Ricci solitons and the Alekseevskii conjecture}, 
    {J. Differential Geom.} 
    {\bf 98} 
    {(2014)}, 
    {no. 2}, 
    {315--347}. 
  }
  \bibitem{Lau10}{
    {\sc J. Lauret}, 
    {\em Einstein solvmanifolds are standard}, 
    {Ann. of Math. (2)} 
    {\bf 172} 
    {(2010)}, 
    {no. 3}, 
    {1859--1877}. 
  }

  \bibitem{Lau12}{
    \bysame, 
    {\em Convergence of homogeneous manifolds}, 
    {J. Lond. Math. Soc. (2)} 
    {\bf 86} 
    {(2012)}, 
    {no. 3}, 
    {701--727}. 
  }

  \bibitem{Lau22}{
    \bysame, 
    {\em On the stability of homogeneous Einstein manifolds}, 
    {Asian J. Math.} 
    {\bf 26} 
    {(2022)}, 
    {no. 4}, 
    {555--584}. 
  }

  \bibitem{LW22}{
    {\sc J. Lauret}, 
    {\sc C. Will}, 
    {\em On the stability of homogeneous Einstein manifolds {II}}, 
    {J. Lond. Math. Soc. (2)} 
    {\bf 106} 
    {(2022)}, 
    {no. 4}, 
    {3638--3669}. 
  }

  \bibitem{LW25}{
    \bysame, 
    {\em Einstein metrics on aligned homogeneous spaces with two factors}, 
    {J. Lond. Math. Soc. (2)} 
    {\bf 111} 
    {(2025)}, 
    {no. 3}, 
    {Paper No. e70120, 26}. 
  }

  \bibitem{Nik04}{
    {\sc Yu. G. Nikonorov}, 
    {\em Compact homogeneous Einstein 7-manifolds}, 
    {Geom. Dedicata} 
    {\bf 109} 
    {(2004)}, 
    {7--30}. 
  }

  \bibitem{ONei66}{
    {\sc B. O'Neill}, 
    {\em The fundamental equations of a submersion}, 
    {Michigan Math. J.} 
    {\bf 13} 
    {(1966)}, 
    {459--469}. 
  }

  \bibitem{Ped19}{
    {\sc F. Pediconi}, 
    {\em Diverging sequences of unit volume invariant metrics with bounded curvature}, 
    {Ann. Global Anal. Geom.} 
    {\bf 56} 
    {(2019)}, 
    {no. 3}, 
    {519--553}. 
  }

  \bibitem{Ped20}{
    \bysame, 
    {\em A local version of the Myers--Steenrod theorem}, 
    {Bull. Lond. Math. Soc.} 
    {\bf 52} 
    {(2020)}, 
    {no. 5}, 
    {871--884}. 
  }

  \bibitem{SS25}{
    {\sc P. Schwahn}, 
    {\sc U. Semmelmann}, 
    {\em Einstein metrics, their moduli spaces and stability}, 
    {preprint}. 
    {\tt arXiv:2507.18463 [math.DG]}. 
  }

  \bibitem{Spa11}{
    {\sc J. Sparks}, 
    {\em Sasaki--Einstein manifolds}, 
    {in \em Surveys in differential geometry. Volume XVI. Geometry of special holonomy and related topics}, 
    {265--324}, 
    {Surv. Differ. Geom.} 
    {\bf 16} 
    {Int. Press, Somerville, MA}, 
    {2011}. 
  }

  \bibitem{Wan82}{
    {\sc M. Wang}, 
    {\em Some examples of homogeneous Einstein manifolds in dimension seven}, 
    {Duke Math. J.} 
    {\bf 49} 
    {(1982)}, 
    {no. 1}, 
    {23--28}. 
  }

  \bibitem{Wan99}{
    \bysame, 
    {\em Einstein metrics from symmetry and bundle constructions}, 
    {in \em Surveys Diff. Geom. VI: essays on Einstein manifolds}, 
    {287--325}, 
    {C. LeBrun}, 
    {Wang, M.} (eds.), 
    {Int. Press, Boston, MA}, 
    {1999}. 
  }

  \bibitem{Wan12}{
    \bysame, 
    {\em Einstein metrics from symmetry and bundle constructions: a sequel}, 
    {253--309}. 
    {in \em Differential geometry}, 
    {Adv. Lect. Math. (ALM)} 
    {\bf 22}, 
    {Int. Press, Somerville, MA}, 
    {2012}. 
  }

  \bibitem{WZ86}{
    {\sc M. Wang}, 
    {\sc W. Ziller}, 
    {\em Existence and non-existence of homogeneous Einstein metrics}, 
    {Invent. Math.} 
    {\bf 84} 
    {(1986)}, 
    {no. 1}, 
    {177--194}. 
  }

  \bibitem{WZ90}{
    \bysame, 
    {\em Einstein metrics on principal torus bundles}, 
    {J. Differential Geom.} 
    {\bf 31} 
    {(1990)}, 
    {no. 1}, 
    {215--248}. 
  }
\end{thebibliography}
\end{document}